\newcommand{\N}{\mathbb{N}}
\newcommand{\Z}{\mathbb{Z}}
\newcommand{\C}{\mathbb{C}}
\newcommand{\Q}{\mathbb{Q}}
\renewcommand{\H}{\mathbb{H}}
\newcommand{\SL}{\mathrm{SL}}
\newcommand{\Mp}{\mathrm{Mp}}
\DeclareMathOperator{\imag}{Im}
\DeclareMathOperator{\real}{Re}
\DeclareMathOperator{\e}{\mathfrak{e}}
\newcommand{\tr}{\mathrm{tr}}
\numberwithin{equation}{section}
\newtheorem{theorem}{Theorem}[section]
\newtheorem{lemma}[theorem]{Lemma}
\newtheorem{proposition}[theorem]{Proposition} 
\theoremstyle{definition} 
\newtheorem{example}[theorem]{Example}
\newtheorem{remark}[theorem]{Remark}
\date{\today}
\author{Steffen L\"obrich and Markus Schwagenscheidt}
\address{Korteweg-de Vries Institute for Mathematics, University of Amsterdam, Science Park 105-107, 1098 XG Amsterdam, The Netherlands}
\email{s.loebrich@uva.nl}
\address{ETH Z\"urich Mathematics Dept., R\"amistrasse 101, CH-8092 Z\"urich, Switzerland}
\email{mschwagen@ethz.ch}
\title[Fourier coefficients of meromorphic modular forms]{Arithmetic properties of Fourier coefficients of meromorphic modular forms}
\subjclass[2010]{11F30, 11F33, 11F37, 11F25, 11F27}
\thanks{The second author is supported by SNF project 200021\_185014. We thank Vicen\c{t}iu Pa\c{s}ol and Wadim Zudilin for helpful remarks.}
\begin{document} 

\begin{abstract}
	We investigate integrality and divisibility properties of Fourier coefficients of meromorphic modular forms of weight $2k$ associated to positive definite integral binary quadratic forms. For example, we show that if there are no non-trivial cusp forms of weight $2k$, then the $n$-th coefficients of these meromorphic modular forms are divisible by $n^{k-1}$ for every natural number $n$. Moreover, we prove that their coefficients are non-vanishing and have either constant or alternating signs. Finally, we obtain a relation between the Fourier coefficients of meromorphic modular forms, the coefficients of the $j$-function, and the partition function.
\end{abstract}

\maketitle

\section{Introduction and statements of results}
\label{sec:introduction}

The study of divisibility properties of Fourier coefficients of modular forms has a long and rich history. For example, a classical result of Lehner \cite{lehner1,lehner2} asserts that the coefficients $c_j(n)$ of the modular $j$-invariant satisfy
\[
2^{3a+8}3^{2b+3}5^{c+1}7^d  \mid c_j\left( 2^a 3^b 5^c 7^d n\right)
\]
for all $a,b,c,d \in \N_0$ and $n \in \N$. In a similar spirit, Duke and Jenkins \cite{dukejenkins2} proved that the Fourier coefficients $c(n)$ of certain weakly holomorphic modular forms of positive even weight $k$ for $\Gamma := \SL_2(\Z)$ satisfy the divisibility condition $n^{k-1} \mid c(n)$ for every $n \in \N$. See also \cite{aas, griffin, hondakaneko, jenkinsmolnar, kohlberg} for some related results.

More recently, Broadhurst and Zudilin \cite{broadhurstzudilin} observed that the coefficients of a certain meromorphic modular form of weight $4$ and level $\Gamma_0(8)$ with poles in the upper half-plane seemed to satisfy $n \mid c(n)$ for $n \in \N$. This observation was proved shortly after by Li and Neururer \cite{lineururer}, using Borcherds' \cite{borcherds} regularized Shimura theta lift of weakly holomorphic modular forms of half-integral weight. Their method also revealed many other examples satisfying the divisibility condition $n \mid c(n)$ for $n \in \N$, such as the weight $4$ meromorphic modular form $\Delta/E_4^2$. Here $E_{2k}$ denotes the normalized Eisenstein series of weight $2k$ for $\Gamma$ with constant term $1$, and $\Delta = (E_4^3-E_6^2)/1728$ is Ramanujan's Delta function. Pa\c{s}ol and Zudilin \cite{pasolzudilin} applied the arguments used in \cite{lineururer} and found more examples of meromorphic modular forms satisfying similar divisiblity properties. They coined the notion of \emph{magnetic} modular forms, which are meromorphic modular forms of weight $2k$ with integral Fourier coefficients whose $(k-1)$-fold formal antiderivatives still have integral coefficients. In other words, the Fourier coefficiens $c(n)$ of magnetic modular forms of weight $2k$ satisfy the divisibility condition 
\begin{align}\label{eq divisibility condition}
n^{k-1} \mid c(n)
\end{align}
for $n \in \N$. Their new examples of such magnetic modular forms include the weight $4$ form $E_4\Delta/E_6^2$ and the weight $6$ form $E_6\Delta/E_4^3$.

In the present article, we investigate the divisibility properties of Fourier coefficients of meromorphic modular forms of positive even weight $2k$ which are associated to positive definite integral binary forms. In the spirit of the works \cite{broadhurstzudilin, lineururer, pasolzudilin}, we show that certain linear combinations of these meromorphic modular forms have integral Fourier coefficients and satisfy the divisibility condition~\eqref{eq divisibility condition}, that is, they are magnetic. Additionally, we study the non-vanishing and sign changes of these Fourier coefficients, and some connections to the partition function and the coefficients of the modular $j$-invariant. Let us describe our results in more detail.

Let $k \in \N$ with $k \geq 2$. Furthermore, for an integer $d \equiv 0,1 \pmod 4$ and a fundamental discriminant $D \in \Z$ with $dD < 0$ we let $\mathcal{Q}_{dD}$ be the set of positive definite integral binary quadratic forms of discriminant $dD$, and we let $\chi_D$ be the usual genus character on $\mathcal{Q}_{dD}$ as defined in \cite{grosskohnenzagier}. We consider the functions
\begin{align}\label{definition fkd}
f_{k,d,D}(z) := C_{k,d,D}\sum_{Q \in \mathcal{Q}_{dD}}\chi_D(Q)Q(z,1)^{-k},
\end{align}
with the constant\footnote{Our normalization of $f_{k,d,D}$ differs from the usual normalizations found in the literature.}
\begin{align}\label{eq normalizing constant}
C_{k,d,D} := \frac{(k-1)!}{(2\pi)^k}\cdot\begin{dcases}
|d|^{k-\frac{1}{2}},  & \text{if $(-1)^k d > 0$}, \\
\ell^{2k-1}|D|^{k-\frac{1}{2}},  & \text{if $(-1)^k d < 0$},
\end{dcases}
\end{align}
where we wrote $d = \ell^2 d_0$ with a fundamental discriminant $d_0$ and $\ell \in \N$. They are meromorphic modular forms of weight $2k$ for $\Gamma$ with poles at the CM points of discriminant $dD$, which are the points $z \in \H$ satisfying $Q(z,1) = 0$ for some $Q \in \mathcal{Q}_{dD}$. Moreover, the functions $f_{k,d,D}$ decay like cusp forms towards $i\infty$. Bengoechea \cite{bengoecheapaper} investigated the arithmetic nature of the Fourier coefficients of $f_{k,d,D}$ and showed that there exists a cusp form $g_{k,d,D} \in S_{2k}$ of weight $2k$ such that the difference $f_{k,d,D}-g_{k,d,D}$ has algebraic coefficients. We have the following slightly stronger integrality result.

\begin{theorem}\label{theorem integrality}
	Let $g_{k,d,D} \in S_{2k}$ be the unique cusp form whose first $\dim(S_{2k})$ Fourier coefficients agree with to those of $f_{k,d,D}$. Then $f_{k,d,D} - g_{k,d,D}$ has integral Fourier coefficients.
\end{theorem}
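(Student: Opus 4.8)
The plan is to reduce the statement, via \emph{pole clearing}, to the integrality theory of weakly holomorphic modular forms of weight $2k$. In outline: first compute the Fourier expansion of $f_{k,d,D}$ and upgrade Bengoechea's algebraicity of its coefficients to rationality; then multiply by a suitable power of a Hilbert class polynomial to obtain a genuinely weakly holomorphic form; and finally exploit the integral Duke--Jenkins structure of such forms, with $g_{k,d,D}$ absorbing the cusp-form ambiguity.

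For the first step I would compute $f_{k,d,D}(z) = \sum_{m \ge 1} c(m) q^m$ (valid for $\imag(z)$ above all poles) directly from the definition. Grouping the forms $[a,b,c] \in \mathcal{Q}_{dD}$ by the orbits of the translation $z \mapsto z+1$ — on which $\chi_D$ is constant — and writing $Q(z,1) = a(z-z_Q)(z-\bar z_Q)$, one applies the Lipschitz formula $\sum_{n \in \Z}(z+n)^{-j} = \frac{(-2\pi i)^j}{(j-1)!}\sum_{m \ge 1} m^{j-1}q^m$ to the partial-fraction decomposition of $(z-z_Q)^{-k}(z-\bar z_Q)^{-k}$. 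This expresses $c(m)$ as a combination of the quantities $m^{j-1}e^{-2\pi i m z_Q}$ ($1 \le j \le k$), weighted by $\chi_D(Q)$ and algebraic factors; crucially, the transcendental factor $(2\pi)^k/(k-1)!$ is cancelled exactly by the normalizing constant $C_{k,d,D}$. Since the exponentials $e^{-2\pi i m z_Q}$ are values of the weakly holomorphic function $q^{-m}$ at the CM points $z_Q$ of discriminant $dD$, the same complex-multiplication input that yields Bengoechea's algebraicity, combined with Galois descent (Shimura reciprocity permutes the $z_Q$ compatibly with $\chi_D$), gives $c(m) \in \Q$; hence $g_{k,d,D}$ has rational coefficients as well.

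For integrality, let $\Psi = \prod_{Q \in \Gamma \backslash \mathcal{Q}_{dD}}(j - j(z_Q))$ be the Hilbert/ring class polynomial of discriminant $dD$ evaluated at the modular invariant $j$; it is a weakly holomorphic modular function with integral Fourier coefficients, leading term $q^{-h}$ with $h = |\Gamma \backslash \mathcal{Q}_{dD}|$, vanishing at every CM point of discriminant $dD$. Consequently $\Psi^k \cdot (f_{k,d,D} - g_{k,d,D})$ is holomorphic on $\H$ — the order-$k$ poles of $f_{k,d,D}$ being cancelled — and, having a pole only at the cusp, it is a weakly holomorphic modular form of weight $2k$ with rational coefficients. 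Because $\Psi^k$ is a unit in $\Z((q))$, it suffices to prove that this form lies in $M^!_{2k}(\Z)$. Now a weakly holomorphic form of weight $2k$ is integral as soon as its principal part is integral and the finitely many coefficients in the \emph{gap} range $1 \le m \le \dim S_{2k}$ produce an integral cusp form (via the Duke--Jenkins basis); the freedom of choosing $g_{k,d,D}$ to match the first $\dim S_{2k}$ coefficients of $f_{k,d,D}$ is precisely what arranges the latter condition.

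The main obstacle is the integrality of the principal part of $\Psi^k(f_{k,d,D} - g_{k,d,D})$, i.e.\ of the finitely many low-order coefficients of $f_{k,d,D}$ after convolution with $\Psi^k$, which is not handled by the cusp-form correction and must be proved separately. Establishing it requires a genuine $p$-adic analysis of the trace formula from the first step: one must show that the genus-character Gauss sums, the class-number contributions, and the discriminant factors hidden in $C_{k,d,D}$ combine to a $p$-integer for every prime $p$. I expect the delicate primes to be those dividing $dD$ — which enter both through $\Psi$ and through $\chi_D$ — together with the small primes $p \le 2k$ coming from the factorials in the Lipschitz formula; these would have to be treated by a careful local computation, whereas for all remaining primes integrality should be immediate from the shape of the coefficients.
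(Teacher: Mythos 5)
Your proposal is a genuinely different route from the paper's, but it has a gap at exactly the decisive point. The paper never clears poles: it proves (Proposition~\ref{proposition fourier expansion half-integral}) that the coefficients of $f_{k,d,D}$ are, up to the factor $\ell^{2k-1}$ and a sign, the divisor sums defining the $D$-th Shimura lift of the half-integral weight Maass--Poincar\'e series $P_{\frac{1}{2}+k,|d|}$ (via the Sali\'e/Kloosterman sum identity of Duke--Imamo\={g}lu--T\'oth), then invokes the Duke--Jenkins integral basis \emph{in half-integral weight} (Lemma~\ref{lemma integral basis}) to write $P_{\frac{1}{2}+k,|d|}=G+(\text{integral part})$ with $G\in S_{\frac12+k}$ a cusp form, so that $\ell^{1-2k}f_{k,d,D}-\mathcal{S}_D(G)$ is manifestly integral; the case $(-1)^kd>0$ is reduced to this one by Lemma~\ref{lemma case change}. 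All coefficients are handled at once and no separate analysis of a principal part is needed.

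In your argument, by contrast, everything is funneled into the integrality of the finitely many coefficients of $\Psi^k(f_{k,d,D}-g_{k,d,D})$ of index $n\le\dim S_{2k}$, and you explicitly leave this to an unperformed ``careful local computation.'' That is not a technical loose end but the entire content of the theorem: even the \emph{rationality} of these numbers is nontrivial, since the individual terms $e^{-2\pi i m z_Q}$ are transcendental and only the full genus-character-weighted trace over $\mathcal{Q}_{dD}$, with the precise normalization $C_{k,d,D}$, is algebraic (this is essentially Proposition~\ref{proposition fourier expansion CM} combined with the CM theory behind \cite{dukejenkins}, and the $(2\pi)^k$-cancellation you describe is exact only for the top term $j=k$ of the partial-fraction expansion). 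There is also a bookkeeping problem in the reduction itself: choosing $g_{k,d,D}$ so that $f_{k,d,D}-g_{k,d,D}=O(q^{\dim S_{2k}+1})$ controls only $\dim S_{2k}$ coefficients, whereas the ``gap'' coefficients $c(1),\dots,c(\dim S_{2k})$ of $\Psi^k(f_{k,d,D}-g_{k,d,D})$ are convolutions involving the coefficients of $f_{k,d,D}-g_{k,d,D}$ up to index $hk+\dim S_{2k}$, so the cusp-form correction does not ``arrange the latter condition'' as you claim. If you want to pursue a pole-clearing/CM-value strategy, the viable version is the one the paper sketches after Proposition~\ref{proposition fourier expansion CM}: express $c_{f_{k,d,D}}(n)$ as $\tr_{d,D}^*(P_{2-2k,n})$ and use the integrality of twisted traces of weakly holomorphic forms from \cite{dukejenkins}, after first correcting for the non-holomorphic part of $P_{2-2k,n}$ (which is where the cusp form $g_{k,d,D}$ genuinely enters).
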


The proof of Theorem~\ref{theorem integrality} will be given in Section~\ref{section proof theorem integrality}. As an example, we consider the weight $8$ meromorphic modular form $f_{4,-3,1}$. Since there are no non-trivial cusp forms of weight $8$, Theorem~\ref{theorem integrality} tells us that the coefficients of $f_{4,-3,1}$ are integers. They can be computed using the formulas given in Section~\ref{section fourier expansions} below. On the other hand, noting that $f_{4,-3,1}$ has a fourth order pole at $z = e^{2\pi i/3}$, one can check by comparing the first few Fourier coefficients that we have the representation
\[
f_{4,-3,1} = -8\left(\frac{\Delta}{E_4}-3072 \frac{\Delta^2}{E_4^4}\right).
\] 
We give the prime factorizations of the coefficients $c(n)$ of $f_{4,-3,1}$ for $1 \leq n \leq 13$.
\begin{align*}
\begin{array}{c|r}
n & c(n) \\
\hline
1 & -2^3 \\
2 & 2^6 \cdot 3 \cdot 139 \\
3 & - 2^5 \cdot 3^7 \cdot 19^2\\
4 & 2^9 \cdot 11 \cdot 2701693\\
5 & -2^4 \cdot 3 \cdot 5^3 \cdot 281 \cdot 881 \cdot 4889\\
6 & 2^8 \cdot 3^8 \cdot 17 \cdot 47 \cdot 2241181\\
7 & -2^6 \cdot 7^3 \cdot 2719 \cdot 18970105159\\
8 & 2^{12} \cdot 3 \cdot 5^2 \cdot 1295477040593987\\
9 & - 2^3 \cdot 3^{14}\cdot 19 \cdot 182694956615167 \\
10 & 2^7 \cdot 3^2 \cdot 5^3 \cdot 295642982601336076331 \\
11 & - 2^5 \cdot 3 \cdot 7 \cdot 11^3 \cdot 4057 \cdot 20107 \cdot 181052802473957 \\
12 & 2^{11} \cdot 3^7 \cdot 43 \cdot 20724160121281042379621 \\
13 & -2^4 \cdot 13^3 \cdot 73 \cdot 79 \cdot 2551 \cdot 2280777977195403472231
\end{array}
\end{align*}
We notice that the coefficients $c(n)$ given in the table satisfy the divisibility property
\begin{align*}
n^{3} \mid c(n)
\end{align*}
for $1 \leq n \leq 13$. Moreover, the coefficients $c(n)$ with $3 \mid n$ are divisible by higher powers of $3$, namely by $3^{7\nu_3(n)}$. Before we state our main result explaining these phenomena, we need to introduce some more notation. Following \cite{grosszagier}, we define a \emph{relation} for $S_{2k}$ to be a sequence $\underline{\lambda} = (\lambda_m)$ of integers $\lambda_m \in \Z$ which are almost all zero and satisfy
\[
\sum_{m =1}^\infty \lambda_m c_g(m) = 0
\]
for every cusp form $g = \sum_{m=1}^\infty c_g(m)q^m \in S_{2k}$.
For $m \in \N$ we let $T_m$ denote the usual Hecke operator on modular forms of weight $2k$, see \eqref{eq hecke operator integral weight}. The \emph{Hecke translate} of $f_{k,d,D}$ corresponding to a relation $\underline{\lambda}$ is defined by
\[
f_{k,d,D,\underline{\lambda}}:= \sum_{m\geq 1} \lambda_m f_{k,d,D}|T_m.
\]
Note that any Hecke translate of a cusp form $g \in S_{2k}$ vanishes, see \cite{loebrichschwagenscheidt}, Lemma 6.1. In particular, Theorem~\ref{theorem integrality} asserts that $f_{k,d,D,\underline{\lambda}}$ has integral Fourier coefficients. We have the following divisibility property.

\begin{theorem}\label{theorem hecke translates}
	Every Hecke translate $f_{k,d,D,\underline{\lambda}}$ has integral Fourier coefficients $c(n)$ which satisfy the divisibility property $n^{k-1} \mid c(n)$ for every $n \geq 1$. More precisely, the following holds.
	\begin{enumerate}
		\item Let $(-1)^k d > 0$. If we write $n = n_1 n_2$ where every prime divisor of $n_1$ divides $D$, and $n_2$ is coprime to $D$, then $n_1^{2k-1}n_2^{k-1} \mid c(n)$.
		\item Let $(-1)^k d < 0$ and $d = \ell^2 d_0$ with a fundamental discriminant $d_0$ and $\ell \in \N$. If we write $n = n_1 n_2$ where every prime divisor of $n_1$ divides $d_0$, and $n_2$ is coprime to $d_0$, then $n_1^{2k-1}n_2^{k-1} \mid c(n)$.
	\end{enumerate}
\end{theorem}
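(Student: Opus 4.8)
The plan is to reduce the statement to the integral modular form furnished by Theorem~\ref{theorem integrality} and then to read off the divisibility from an explicit formula for the Fourier coefficients together with the action of the Hecke operators. Integrality is almost immediate: as recalled above, any Hecke translate of a cusp form $g \in S_{2k}$ vanishes, so the translate annihilates $g_{k,d,D}$ and
\[
f_{k,d,D,\underline{\lambda}} = \sum_{m \geq 1}\lambda_m \bigl(f_{k,d,D}-g_{k,d,D}\bigr)\big| T_m.
\]
Since $f_{k,d,D}-g_{k,d,D}$ has integral Fourier coefficients by Theorem~\ref{theorem integrality}, and the Hecke operators $T_m$ in the normalization of \eqref{eq hecke operator integral weight} send $q$-expansions with coefficients in $\Z$ to $q$-expansions with coefficients in $\Z$, the coefficients $c(n)$ are integers.

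For the divisibility I would begin from the explicit Fourier expansion of $f_{k,d,D}$ established in Section~\ref{section fourier expansions}. Writing $Q=[a,b,c]$ and factoring $Q(z,1)^{-k}=a^{-k}(z-z_Q)^{-k}(z-\bar{z}_Q)^{-k}$, a contour integration expresses the $n$-th coefficient as $C_{k,d,D}$ times a finite sum over $Q \in \mathcal{Q}_{dD}$ of $\chi_D(Q)$ against the residue of $(z-\bar{z}_Q)^{-k}e^{-2\pi i n z}$ at the order-$k$ pole $z_Q$. By Leibniz' rule this residue is $\frac{1}{(k-1)!}$ times a polynomial in $n$ of degree $k-1$ whose top term carries the factor $(-2\pi i n)^{k-1}$; the constants $(k-1)!/(2\pi)^k$ in \eqref{eq normalizing constant} are tailored to absorb exactly these factors. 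The structural claim I would then prove is that, after forming the combination $\sum_m \lambda_m (\,\cdot\,)|T_m$, the coefficient can be rewritten as $n^{k-1}$ times a twisted trace over the CM points of discriminant $dD$ of a Hecke translate of a weight-$0$ weakly holomorphic modular function — the very trace that yields the links to the $j$-invariant and the partition function announced in the abstract.

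The role of the relation $\underline{\lambda}$ is to remove the cuspidal obstruction. Concretely, I would show that passing to the Hecke translate turns the naive weight-$2k$ residue expression into (the $(k-1)$-st $q$-derivative, in the sense of Bol's identity, of) a genuinely weakly holomorphic object of weight $2-2k$, so that the subleading terms of the degree-$(k-1)$ residue polynomial cancel against one another and only the factor $n^{k-1}$ remains. Combining this with the integrality of the surviving weight-$0$ CM trace and with the $\Z$-integrality from the first step then promotes the analytic identity $c(n)=n^{k-1}\cdot b(n)$ to a divisibility with $b(n)\in\Z$. I expect this cancellation to be the main obstacle: one must control all $k-1$ subleading terms of the residue simultaneously and verify that the relation forces their aggregate to be divisible by the full power $n^{k-1}$, and not merely by the leading power produced by the top residue.

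Finally, for the refined divisibility I would localize at a prime $p$ dividing $D$ in case~(1), respectively $d_0$ in case~(2). At such a ramified prime the genus character $\chi_D$ and the operator $T_p$ interact specially: because $p \mid dD$, the $p$-Hecke correspondence on the CM points of discriminant $dD$ degenerates, and $T_p$ acts on the twisted trace through multiplication by $p^{2k-1}$ on its $p$-part. Tracking this through the factorization $n=n_1n_2$ replaces, for each prime $p \mid n_1$, the factor $p^{k-1}$ of the generic estimate by $p^{2k-1}$, which yields $n_1^{2k-1}n_2^{k-1}\mid c(n)$. I anticipate that the bookkeeping at these ramified primes — in particular confirming that $\chi_D(Q)$ does not degenerate and that the $T_p$-recursion closes up — will be the most delicate part of this last step.
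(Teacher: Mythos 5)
Your integrality argument is correct and matches the paper's own observation: since $\sum_m\lambda_m\, g_{k,d,D}|T_m=0$, the translate equals $\sum_m\lambda_m(f_{k,d,D}-g_{k,d,D})|T_m$, and the operators \eqref{eq hecke operator integral weight} preserve integrality. The divisibility part, however, has a genuine gap. Your central structural claim --- that after applying the relation the $n$-th coefficient becomes $n^{k-1}$ times an integral weight-$0$ CM trace because the subleading terms of the degree-$(k-1)$ residue polynomial cancel --- is precisely the step that would need to be proved, you flag it yourself as the main obstacle, and it is not how the $n$-dependence actually factors. The paper's mechanism (Proposition~\ref{proposition fourier expansion half-integral}) writes the coefficient as $\pm\, n^{2k-1}\sum_{m\mid n}\left(\frac{D}{m}\right)m^{-k}c_F\left(n^2|D|/m^2\right)$ for a half-integral weight form $F$; the relation $\underline{\lambda}$ enters only to make $F=\sum_m\lambda_m P_{\frac{3}{2}-k,|d|}|T_{m^2}$ weakly holomorphic (via Hecke-equivariance of the Shimura correspondence), whence $c_F\in\Z$ by Lemma~\ref{lemma integral basis}, and $n^{k-1}\mid c(n)$ falls out because $m\mid n$ forces $n^{2k-1}m^{-k}\in n^{k-1}\Z$. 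No cancellation among subleading residue terms is needed or established; to run the argument on the weight-$0$ side you would have to prove an identity playing the role of Proposition~\ref{proposition fourier expansion half-integral} (or of Proposition~\ref{proposition fourier expansion CM} together with a divisibility statement for the normalized traces), not posit one.

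The refined divisibility cannot be explained by ``how $T_p$ acts at ramified primes'': for $k\in\{2,3,4,5,7\}$ one may take $\underline{\lambda}=(1,0,0,\dots)$, so no Hecke operator is applied at all, yet $n_1^{2k-1}n_2^{k-1}\mid c(n)$ still holds. In the paper it is an immediate consequence of the divisor-sum formula: $\left(\frac{D}{m}\right)=0$ whenever $\gcd(m,D)>1$, so only $m\mid n_2$ contributes and $n_2^{k}$ clears all denominators, leaving divisibility by $n^{2k-1}/n_2^{k}=n_1^{2k-1}n_2^{k-1}$. Finally, you treat the two signs of $(-1)^k d$ uniformly, but they are genuinely different: for $(-1)^k d<0$ the relevant half-integral weight is $\frac{1}{2}+k>0$, where Lemma~\ref{lemma integral basis}(2) gives integrality only up to a cusp form, and the paper instead reduces this case to the other one by interchanging $d$ and $D$ via Lemma~\ref{lemma case change}. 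Your proposal is silent on both points.
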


The proof of Theorem~\ref{theorem hecke translates} can be found in Section~\ref{section proof theorem hecke translates}. It relies on the fact that the coefficients of $f_{k,d,D}$ can be written as divisor sums involving the coefficients of certain harmonic Maass-Poincar\'e series of half-integral weight, compare Proposition~\ref{proposition fourier expansion half-integral} below. Although we will prove this identity using some well-known relations between Kloosterman and Sali\'e sums, we would like to mention that it can be explained more conceptually from the fact that $f_{k,d,D}$ lies in the image of a Shimura-type theta lift, see \cite{bringmannkanevonpippich}. In particular, our method to prove Theorem~\ref{theorem hecke translates} is analogous to the one used in \cite{lineururer, pasolzudilin}.

\begin{remark}
	\begin{enumerate}
		\item For $k \in \{2,3,4,5,7\}$ we have $S_{2k} = \{0\}$, so we can choose the relation $\underline{\lambda} = (1,0,0,\dots)$. Then Theorem~\ref{theorem hecke translates} states that in these cases the $n$-th coefficient of $f_{k,d,D}$ is divisible by $n^{k-1}$ for every $d \equiv 0,1 \pmod 4$, every fundamental discriminant $D$ with $dD < 0$, and every $n \in \N$. The high $3$-divisibility of the coefficients of $f_{4,-3,1}$ of indices divisible by $3$ can be explained using item (1) of Theorem~\ref{theorem hecke translates} and the fact that $f_{4,-3,1} = f_{4,1,-3}$, see Lemma~\ref{lemma case change} below.
		\item Comparing the orders at the poles and the first few Fourier coefficients, we see that 
		\[
		f_{2,-3,1} = -64\frac{\Delta}{E_4^2}, \qquad f_{2,-4,1}= 108\frac{E_4\Delta}{E_6^2}, \qquad f_{3,-3,1} = 384\frac{E_6\Delta}{E_4^3}.
		\]
		The divisibility properties of these functions were studied in \cite{lineururer, pasolzudilin}. Although the supplements (1) and (2) of Theorem~\ref{theorem hecke translates} are not stated explicitly in \cite{lineururer, pasolzudilin}, they readily follow from their methods, as well.
		\item If $S_{2k}\neq \{0\}$, then the function $f_{k,d,D}-g_{k,d,D}$ from Theorem~\ref{theorem integrality} does usually not satisfy $n^{k-1} \mid c(n)$. We wonder if it is always possible to find a cusp form $g$ such that the $n$-th coefficient of $f_{k,d,D} - g$ if divisible by $n^{k-1}$ for every $n \in \N$.
	\end{enumerate}
\end{remark}

\begin{example}\label{example hecke translate}
	For $k = 6$ the space $S_{12}$ is spanned by $\Delta(z) = q-24q^2+252q^3+\dots$. If we choose the relation $\underline{\lambda} = (24,1,0,0,\dots)$ for $S_{12}$, then we have
	\[
	f_{6,d,D,\underline{\lambda}} = 24f_{6,d,D}+f_{6,d,D}|T_2 = f_{6,4d,D} + \left(24+32\left(\frac{d}{2} \right) \right)f_{6,d,D} + 2048 f_{6,d/4,D},
	\]
	where we used the action of $T_p$ on $f_{k,d,D}$ given in \eqref{eq hecke action fkd} below. For instance, for $d = -3$ and $D = 1$ we find
	\begin{align*}
	f_{6,-3,1,\underline{\lambda}}& = f_{6,-12,1}-8f_{6,-3,1} \\
	&= 15360(q+3471360q^2+1777624086780q^3+448590364266201088q^4+ \\
	&\qquad + 78199299812183544918750q^5+ 10856758910771768587996372992q^6+\dots).
	\end{align*}
	We see that the first few coefficients satisfy $n^{5} \mid c(n)$ and $3^{11\nu_3(n)} \mid c(n)$, in accordance with Theorem~\ref{theorem hecke translates}. It might be possible to get the same divisibility result without the factor $15360$ by a more detailed analysis, similarly to \cite{pasolzudilin}.
\end{example}

From the explicit example $f_{4,-3,1}$ given above it seems that its Fourier coefficients have alternating signs. Indeed, we have the following result.

\begin{proposition}\label{proposition sign changes}
	For $k \geq 2$ and all discriminants $d,D$ with $dD < 0$ and $D$ fundamental we have 
	\[
	(-1)^{k+ndD}c_{f_{k,d,D}}(n) > 0 ,
	\]
	for every $n \in\N$. In particular, none of the coefficients of $f_{k,d,D}$ vanishes.
\end{proposition}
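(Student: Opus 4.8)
The plan is to read the sign directly off the explicit Fourier expansion of $f_{k,d,D}$ recorded in Proposition~\ref{proposition fourier expansion half-integral}, which expresses the $n$-th coefficient as a convergent sum over denominators $c\geq 1$ of Sali\'e-type exponential sums weighted by a Bessel factor. The decisive structural point is that, since the poles of $f_{k,d,D}$ sit at the interior CM points $z_Q\in\H$ rather than at the cusp, the Bessel function occurring is a \emph{modified} Bessel function of the first kind $I_{2k-1}$ (equivalently, the half-integral weight Poincar\'e series feeding the theta lift of \cite{bringmannkanevonpippich} carries an exponentially growing principal part at an interior point), and $I_{2k-1}(x)>0$ for all $x>0$. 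Thus each term of the expansion is a positive Bessel factor times an exponential sum, and the entire sign problem collapses to controlling those exponential sums. A preliminary reduction I would record is that $c_{f_{k,d,D}}(n)\in\R$: since $\chi_D$ is a real class-group character with $\chi_D([a,-b,c])=\chi_D([a,b,c])$, the function $f_{k,d,D}$ is real on the imaginary axis, so its coefficients are real and it makes sense to speak of their sign.

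First I would isolate the leading term $c=1$. Here the exponential sum degenerates to a single Gauss sum attached to $\chi_D$ on forms of discriminant $dD$, which I would evaluate in closed form. Tracking the (positive) normalizing constant $C_{k,d,D}$ together with the factor $(-2\pi i)^k/(k-1)!$ produced by the Lipschitz-type summation over $\Gamma_\infty$-orbits, and the parity of the Gauss sum, I expect the $c=1$ contribution to equal $(-1)^{k+ndD}$ times a strictly positive quantity once the two residues at $z_Q$ and $\bar z_Q$ are assembled into the real Bessel factor. The parity $(-1)^{ndD}$ is precisely what separates the constant-sign regime ($dD$ even, giving $\operatorname{sign}c_{f_{k,d,D}}(n)=(-1)^k$) from the alternating regime ($dD$ odd, giving $(-1)^{n+k}$), matching the statement. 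The proposition then follows, including the non-vanishing assertion, once the leading term is shown to dominate the tail $\sum_{c\geq 2}$ in absolute value for every $n\geq 1$.

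The hard part will be establishing this domination uniformly in $n$, with strict inequality and no exceptional small cases. For large $n$ it is routine: $I_{2k-1}$ is increasing while the Bessel argument $\pi n\sqrt{|dD|}/c$ decreases in $c$, so the $c=1$ term outgrows any Weil-type bound on the tail. The genuine difficulty is small $n$, where the Bessel arguments are comparable and the asymptotic gap is thin; there I would either invoke sharp (rather than asymptotic) lower bounds for $I_{2k-1}$ combined with the elementary estimate $|\mathcal{S}_{d,D}(n,c)|\ll c$ for the Sali\'e sums, or, if a clean uniform bound is unavailable, dispose of the finitely many residual $(n,c)$ ranges using the exact coefficient formula. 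A secondary, purely bookkeeping obstacle is to verify that the passage from the genus-character sum over $\mathcal{Q}_{dD}$ to the Sali\'e sum preserves the claimed parity, which demands care with the contributions of the two conjugate poles and with the relation between Kloosterman and Sali\'e sums already used in Proposition~\ref{proposition fourier expansion half-integral}.
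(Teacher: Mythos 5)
Your overall strategy is the same as the paper's: start from the Bessel--Sali\'e expansion (which is Proposition~\ref{proposition fourier expansion bessel}, not Proposition~\ref{proposition fourier expansion half-integral} as you cite), observe that the $a=1$ Sali\'e sum equals $(-1)^{ndD}$, and reduce everything to showing that the tail over $a\geq 2$ is strictly smaller in absolute value than the leading Bessel term. The quantitative part of your plan, however, has concrete gaps. First, the Bessel index is $k-\tfrac{1}{2}$, not $2k-1$; this is not cosmetic, because it governs the convergence rate of the tail and is exactly what makes $k=2,3$ delicate. Second, the Sali\'e bound $|S_{a,d,D}(n)|\ll a$ you propose to use is too weak: combined with the ratio bound $I_{k-1/2}(x/a)\leq a^{1/2-k}I_{k-1/2}(x)$ it produces a tail of size $\sum_{a\geq 2}a^{1-k}$, which diverges for $k=2$ and exceeds $1$ for $k=3$. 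The paper instead bounds $|S_{a,d,D}(n)|$ by the number of $b\pmod{2a}$ with $b^2\equiv dD\pmod{4a}$ and evaluates that count explicitly (Lemma~\ref{lemma salie estimate}), getting $|S_{a,d,D}(n)|\leq \frac{2\sqrt{2}}{\sqrt{3}}\sqrt{a}$ with an explicit constant; this square-root saving with a named constant is essential.

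Third, your diagnosis that small $n$ is the hard case and large $n$ is routine is off target, and your fallback of checking ``finitely many residual $(n,c)$ ranges'' cannot work, since the obstruction is an infinite tail in $a$ for every fixed $n$, not finitely many exceptional pairs. The paper's key input is that $a\mapsto a^{\nu}I_{\nu}(x/a)$ is monotonically decreasing for every fixed $x>0$ (via $I_{\nu}'(z)=I_{\nu+1}(z)+\frac{\nu}{z}I_{\nu}(z)$), so the ratio bound $I_{\nu}(x/a)\leq a^{-\nu}I_{\nu}(x)$ is uniform in $x$ and hence in $n$; there is no large-$n$/small-$n$ dichotomy at all. The genuinely hard case is $k=2$: there $\frac{2\sqrt{2}}{\sqrt{3}}\left(\zeta\left(\tfrac{3}{2}\right)-1\right)>1$, so even the square-root Sali\'e bound does not close the argument, and the paper proves the sharper estimate $I_{3/2}(x/a)\leq \frac{1}{4}a^{-3/2}I_{3/2}(x)$ for $a\geq 2$ and $x\geq \sqrt{3}\pi$ (applicable because $|dD|\geq 3$). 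Without these ingredients your plan does not yield strict domination for $k=2$ or $k=3$.
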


We will prove the proposition in Section~\ref{section proof sign changes} by estimating the various terms in the Fourier expansion of $f_{k,d,D}$ given in Proposition~\ref{proposition fourier expansion bessel} below.

\begin{example}
	Since 
	\[
	\frac{\Delta}{E_4^2} = q - 504q^2 + 180252q^3 - 56364992q^4 + 16415391870q^5 - 4574618335008q^6 + \dots
	\] 
	is a multiple of $f_{2,-3,1}$, we see that it has non-vanishing integral Fourier coefficients with alternating signs. Similarly, 
	\[
	\frac{E_4\Delta}{E_6^2} = q + 1224q^2 + 1009692q^3 + 731063872q^4 + 493402005630q^5 + 318792567653856q^6 \dots
	\]
	is a multiple of $f_{2,-4,1}$ and hence has positive coefficients.
\end{example}

Finally, we exhibit a connection to the partition function $p(n)$ and the coefficients of the modular $j$-invariant. To this end, we need to introduce higher-level versions of the meromorphic modular forms $f_{k,d,D}$. Let $N \in \N$, let $d \equiv r^2 \pmod{4N}$ be a discriminant and $D \equiv \rho^2 \pmod{4N}$ a fundamental discriminant with $dD < 0$, for some $r,\rho \in \Z$. Then we define the function
\[
f_{k,d,r,D,\rho}(z) := C_{k,d,D}\sum_{Q \in \mathcal{Q}_{N,dD,r\rho}}\chi_D(Q)Q(z,1)^{-1},
\]
where $\mathcal{Q}_{N,dD,r\rho}$ is the set of all positive definite integral binary quadratic forms $[a,b,c]$ of discriminant $dD$ with $N \mid a$ and $b \equiv r\rho \pmod{2N}$, and $\chi_D$ is the generalized genus character on $\mathcal{Q}_{N,dD,r\rho}$ as defined in \cite{grosskohnenzagier}. The constant $C_{k,d,D}$ is defined as in \eqref{eq normalizing constant}. We would like to remark that the magnetic modular form $\phi$ of weight $4$ for $\Gamma_0(8)$ considered in \cite{broadhurstzudilin,lineururer} is a multiple of the meromorphic modular form $f_{2,-16,4,1,1}-f_{2,-16,-4,1,1}$ of level $N = 8$, which can be checked by comparing the orders at the poles and the first few Fourier coefficients.

We focus on a particular example in level $N = 6$ and weight $2k = 4$, where we let $D =\rho=1$ and omit them from the notation for brevity. Furthermore, we choose $d = -23$, and consider the linear combination
\begin{align}\label{eq Fminus23}
F_{-23} := 
\sum_{r(12)}\left(\frac{12}{r}\right)f_{2,-23,r} =
 f_{2,-23,1}-f_{2,-23,5}-f_{2,-23,7}+ f_{2,-23,11}.
\end{align}
This is a meromorphic modular form of weight $4$ for $\Gamma_0(6)$ that vanishes at all cusps. Its Fourier coefficients $c_{F_{-23}}(n)$ are related to the partition function $p(n)$ and the coefficients $c_j(n)$ of the $j$-invariant by the following formulas.

\begin{proposition}\label{proposition combinatorial interpretation}
	For $n \geq 1$ we have
	\begin{align*}
	c_{F_{-23}}(n) &= n\sum_{m\mid n}\left(\frac{-23}{n/m} \right)\left( \frac{12}{m}\right)m^2 p\left(\frac{23m^2+1}{24} \right) \\
	&= \sum_{m \mid n}\left(\frac{12}{m} \right)\frac{n}{m}\sum_{d =1}^{\lfloor \sqrt{m^2+24}\rfloor}\left(\frac{12}{d}\right)\frac{m^2-d^2}{24}c_j\left( \frac{m^2-d^2}{24}\right).
	\end{align*}
\end{proposition}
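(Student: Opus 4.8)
The plan is to obtain both identities from the half-integral weight expansion of Proposition~\ref{proposition fourier expansion half-integral}, in its level-$N$ form, specialized to $N=6$, $k=2$, $d=-23$, $D=\rho=1$. First I would apply that expansion to each $f_{2,-23,r}$, writing its $n$-th Fourier coefficient as a divisor sum over the coefficients of a harmonic Maass--Poincar\'e series $P_r$ of weight $\tfrac32-k=-\tfrac12$ in the relevant plus space for $\Gamma_0(24)$. Forming the twisted combination $F_{-23}=\sum_{r(12)}\left(\frac{12}{r}\right)f_{2,-23,r}$ then collapses the dependence on $r$: the twist by $\left(\frac{12}{r}\right)$ isolates a single Poincar\'e series $P$, and the Hecke/Shimura structure of the expansion supplies the genus factor $\left(\frac{-23}{n/m}\right)$, the character $\left(\frac{12}{m}\right)$, and, since $k=2$, the overall factor $n=n^{k-1}$.

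The decisive point is to identify this weight $-\tfrac12$ Poincar\'e series, up to the normalization $C_{2,-23,1}$, with $\eta(24\tau)^{-1}=\sum_{n\ge 0}p(n)\,q^{24n-1}$. I would do this by matching principal parts and the multiplier (a character modulo $12$, compatible with the twist by $\left(\frac{12}{r}\right)$), and verifying that the difference, having trivial principal part, must vanish. Granting the identification, the coefficient of $P$ at index $23m^2$ equals $p\!\left(\frac{23m^2+1}{24}\right)$, which is a genuine integer precisely when $m^2\equiv 1\pmod{24}$, i.e.\ when $(m,6)=1$ --- exactly the support of $\left(\frac{12}{m}\right)$. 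Substituting these coefficients into the divisor sum produces the first identity.

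For the second identity I would pass to the dual description. By Zagier duality the weight $-\tfrac12$ form $\eta(24\tau)^{-1}$ is paired with a weakly holomorphic form of complementary weight $k+\tfrac12=\tfrac52$ whose coefficients are twisted traces of the Hauptmodul $j$ over the CM points of discriminant $-23m^2$; equivalently, one may invoke a Bruinier--Ono-type expression of $p\!\left(\frac{23m^2+1}{24}\right)$ as such a trace. Expanding the trace through the $q$-expansion of $j$ (a Kronecker--Hurwitz/Faber-polynomial evaluation) yields the finite sum $\sum_{d}\left(\frac{12}{d}\right)\frac{m^2-d^2}{24}c_j\!\left(\frac{m^2-d^2}{24}\right)$, in which the cutoff $d\le\lfloor\sqrt{m^2+24}\rfloor$ and the hidden condition $(d,6)=1$ (forced by $24\mid m^2-d^2$) appear automatically. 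Reorganizing the divisor sum of Proposition~\ref{proposition fourier expansion half-integral} along this dual, Shimura-lift reading then gives the second identity. I would stress that the two displayed right-hand sides are not equal term-by-term in $m$ --- their Dirichlet-convolution structures genuinely differ --- so the passage between them is really the scalar identity linking $p\!\left(\frac{23m^2+1}{24}\right)$ to the $j$-traces, itself a consequence of comparing $q$-expansions of weight $-\tfrac12$ forms.

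The main obstacle is this dual identification together with the trace-to-$c_j$ expansion: one must pin down the multiplier system and plus-space conditions at level $24$, carry the constant $C_{2,-23,1}$ through the lift, and keep the congruence and dissection bookkeeping modulo $24$ and modulo $4N$ aligned, so that the genus character $\chi_D$, the twist $\left(\frac{12}{r}\right)$, and the Shimura divisor sum recombine into precisely the stated coefficients. The partition-function identity should fall out almost immediately once $P$ is identified with $\eta(24\tau)^{-1}$; the $j$-function identity will require the more delicate convolution-level reorganization.
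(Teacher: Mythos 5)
Your derivation of the first identity is essentially the paper's argument: write $F_{-23}$ via the level-$6$ (vector-valued) analogue of Proposition~\ref{proposition fourier expansion half-integral}, identify the relevant weight $-\tfrac12$ Poincar\'e series with $(\e_1-\e_5-\e_7+\e_{11})\eta^{-1}$ by matching principal parts (no cusp forms in negative weight, so the identification is exact), and read off the divisor sum. One bookkeeping caveat: with your stated normalization $d=-23$, $D=1$, $k=2$ you are in the case $(-1)^kd<0$ of Proposition~\ref{proposition fourier expansion half-integral vector-valued}, which produces a \emph{weight $\tfrac52$} Poincar\'e series of index $23$, not a weight $-\tfrac12$ series of index $1$. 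To land on $\eta^{-1}$ you must first swap the roles of the two discriminants, using $f_{k,d,r,D,\rho}=f_{k,D,r,d,\rho}$ for fundamental $d,D$, and apply the expansion to $f_{2,1,r,-23,1}$; this is exactly how the paper proceeds, and it is also where the factor $\bigl(\tfrac{-23}{m}\bigr)=\bigl(\tfrac{D}{m}\bigr)$ actually enters (the $\bigl(\tfrac{12}{\cdot}\bigr)$ comes from the component index $\tfrac{n\rho}{m}$, not from a ``Hecke/Shimura structure'').

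For the second identity your route has a genuine gap. You propose to obtain it from the first via Zagier duality and a Bruinier--Ono-type trace formula for $p\bigl(\tfrac{23m^2+1}{24}\bigr)$ over CM points of discriminant $-23m^2$, then ``expand the trace through the $q$-expansion of $j$.'' But the Bruinier--Ono formula expresses the partition function as a trace of a \emph{non-holomorphic} weight $0$ Maass form, not of $j$ itself, and twisted traces of $j$ generate weight $\tfrac32$ forms, not the weight $\tfrac52$ object you need; neither mechanism produces the inner sum $\sum_{d}\bigl(\tfrac{12}{d}\bigr)\tfrac{m^2-d^2}{24}c_j\bigl(\tfrac{m^2-d^2}{24}\bigr)$. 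Moreover, since (as you correctly note) the two divisor sums are not term-by-term equal in $m$, a scalar identity linking $p$ to $j$-traces would still have to be re-convolved against two different Dirichlet structures, and you give no mechanism for that. The paper's argument is both different and much more direct: the inner sum over $d$ is simply the $q^{m^2/24}$-coefficient of the \emph{product} $\eta(\tau)j'(\tau)$ (multiply $\eta=\sum_d\bigl(\tfrac{12}{d}\bigr)q^{d^2/24}$ by $j'=\sum_\ell \ell\, c_j(\ell)q^\ell$; the cutoff comes from $c_j(\ell)=0$ for $\ell<-1$). One checks that $\sum_r\bigl(\tfrac{12}{r}\bigr)\eta(\tau)j'(\tau)\e_r$ is weakly holomorphic of weight $\tfrac52$ for $\rho_6$ with principal part $-(\e_1-\e_5-\e_7+\e_{11})q^{-23/24}$, hence equals $-P_{\frac52,23,1}+P_{\frac52,23,5}$ after verifying that no holomorphic weight $\tfrac52$ form intervenes (a finite coefficient check you would also need to address), and then applies the second case of Proposition~\ref{proposition fourier expansion half-integral vector-valued} directly to $F_{-23}$ with $d=-23$, $D=1$. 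Without this identification of the inner sum as a coefficient of $\eta j'$, your sketch of the second identity does not close.
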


The proof will be given in Section~\ref{section proof combinatorial interpretation}. The basic idea for the first identity is that $F_{-23}$ can be written as a Shimura-type theta lift of the inverse $\eta^{-1}(\tau)$ of the Dedekind eta function. Note that $\eta^{-1}(\tau)$ is a weakly holomorphic modular form of weight $-\frac{1}{2}$ whose coefficients are given by the partition function. Similarly, the second identity follows from the fact that $F_{-23}$ can also be written as a theta lift of the weight $\frac{5}{2}$ weakly holomorphic modular form $\eta(\tau)j'(\tau)$, where $j'(\tau)= \frac{1}{2\pi i}\frac{\partial}{\partial \tau}j(\tau)$. However, for brevity, we will not use the theta lift machinery, but show the above identities directly using identities between Kloosterman and Sali\'e sums.

As a corollary to Proposition~\ref{proposition combinatorial interpretation} we obtain identities between the partition function and the coefficients of the modular $j$-invariant, which are of independent interest. For some explicit instances, we let $n = 5$ and $n = 7$, and find
\begin{align*}
p(24) &= \frac{1}{125}\left(-10+c_j(1)+c_j(-1) \right) = \frac{1}{125}(-10+196884+1) = 1575,  \\
p(47) &= \frac{1}{343}\left(-14+2c_j(2)-c_j(1) \right) = \frac{1}{343}(-14+2\cdot 21493760-196884) = 124754.
\end{align*}
It would be interesting to find a more direct proof for the identities implied by Proposition~\ref{proposition combinatorial interpretation}.

Finally, we would like to remark that the occurrence of the Kronecker symbol $\left(\frac{12}{m} \right)$ in the above formulas implies that $c_{F_{-23}}(n) = n$ whenever $n$ has only the prime divisors $2$ and $3$. This gives a quite interesting looking Fourier expansion. To give an impression, we list the first $20$ coefficients of $F_{-23}$.
\begin{align*}
\begin{array}{r|r}
n & c_{F_{-23}}(n) \\
\hline
1 & 1\\
2 & 2\\
3 & 3\\
4 & 4\\
5 & -196880\\
6 & 6 \\
7 & -42790629\\
8 & 8 \\
9 & 9 \\
10 & -393760 
\end{array} \qquad\qquad 
\begin{array}{r|r}
n & c_{F_{-23}}(n) \\
\hline
11 & 1582436878077\\
12 & 12\\
13 & 285420848487502\\
14 & -85581258\\
15 & -590640\\
16 & 16\\
17 & -8658073941610362614\\
18 & 18\\
19 & -1472066917939200724860 \\
 20 &  -787520 
\end{array}
\end{align*}

The work is organized as follows. We start with a preliminary section about harmonic Maass forms of half-integral weight, Hecke operators, and some properties of the meromorphic modular forms $f_{k,d,D}$. In the remaining sections, we give the proofs of our four main results from the introduction.

\section{Preliminaries}

\subsection{Harmonic Maass forms of half-integral weight}
\label{section harmonic maass forms}

We briefly recall some basic facts about harmonic Maass forms from \cite{bruinierfunke}. Let $\kappa \in \Z$. A smooth function $f: \H \to \C$ is called a harmonic Maass form of weight $\frac{1}{2}+\kappa$ if it transforms like a modular form of weight $\frac{1}{2}+\kappa$ for $\Gamma_0(4)$ (in the usual sense as in \cite{kohnenhalfintegral}), if it is annihilated by the invariant Laplace operator
\begin{align}\label{eq laplace}
\Delta_{\frac{1}{2}+\kappa} = -v^2 \left(\frac{\partial^2}{\partial u^2} + \frac{\partial^2}{\partial v^2} \right) + iv\left(\frac{1}{2}+\kappa\right)\left(\frac{\partial}{\partial u}+i\frac{\partial}{\partial v} \right), \qquad (\tau = u+iv \in \H),
\end{align}
and if it is mapped to a cusp form of weight $\frac{3}{2}-\kappa$ under the antilinear differential operator
\[
\xi_{\frac{1}{2}+\kappa} = 2iv^{\frac{1}{2}+\kappa}\overline{\frac{\partial}{\partial \overline{\tau}}}.
\] 
We let $H_{\frac{1}{2}+\kappa}$ be the space of harmonic Maass forms of weight $\frac{1}{2}+\kappa$ satisfying the Kohnen plus space condition, which means that $F \in H_{\frac{1}{2}+\kappa}$ has a Fourier expansion of the shape
\[
F(\tau) = \sum_{\substack{(-1)^{\kappa}n \equiv 0,1 (4) \\ n\gg -\infty}}c_F^+(n)q^{n} + \sum_{\substack{(-1)^{\kappa}n \equiv 0,1(4) \\ n< 0}}c_F^-(n)\Gamma\left(\frac{1}{2}-\kappa,4\pi |n| v\right)q^{n},
\]
with $c_F^{\pm}(n) \in \C$, where $\Gamma(s,x) = \int_x^\infty e^{-t}t^{s-1}dt$ denotes the incomplete gamma function. The series involving the coefficients $c_F^+(n)$ (resp. $c_F^-(n)$) is called the holomorphic (resp. non-holomorphic) part of $F$. We let $M_{\frac{1}{2}+\kappa}^!$ be the subspace of weakly holomorphic modular forms, which are holomorphic on $\H$, and we let $S_{\frac{1}{2}+\kappa}$ be the subspace of cusp forms.

Important examples of harmonic Maass forms are given by Maass-Poincar\'e series, see \cite{bringmannono}. For every $n > 0$ with $(-1)^{\kappa+1} n \equiv 0,1 \pmod 4$ one can construct a Maass-Poincar\'e series $P_{\frac{1}{2}+\kappa,n} \in H_{\frac{1}{2}+\kappa}$ which satisfies
\[
P_{\frac{1}{2}+\kappa,n}(\tau) = q^{-n}+O(1).
\]
If $\kappa \geq 2$ then $P_{\frac{1}{2}+\kappa,n}$ is weakly holomorphic since there are no cusp forms of negative weight. 

The following lemma due to Duke and Jenkins \cite{dukejenkins} will be crucial for our integrality and divisibility results.

\begin{lemma}\label{lemma integral basis}
		Let $k \in \Z$ with $k \geq 2$.
		\begin{enumerate}
			\item If $F(\tau) = \sum_{n \gg -\infty}c_F(n)q^n \in M_{\frac{3}{2}-k}^!$ has integral coefficients $c_F(n)$ for $n < 0$, then all coefficients of $F$ are integers.
			\item If $F(\tau) = \sum_{n \gg -\infty}c_F(n)q^n \in M_{\frac{1}{2}+k}^!$ has integral coefficients $c_F(n)$ for $n < 0$, then there exists a cusp form $G \in S_{\frac{1}{2}+k}$ such that all of the coefficients of $F-G$ are integers.
		\end{enumerate}
\end{lemma}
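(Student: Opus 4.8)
The plan is to deduce both statements from the existence of a distinguished \emph{integral} basis of the weakly holomorphic plus space in each weight. Concretely, in weight $\tfrac12+\kappa$ I want a basis $\{f_m\}$ in reduced echelon form with respect to the Fourier expansion, so that each $f_m = q^{-m} + O(q^{t})$ has a single term in its principal part, vanishes at every other pivot position, and --- this is the crucial input --- has integral Fourier coefficients. Granting such a basis, the two parts of the lemma become statements about reading off coefficients by triangular back-substitution, and the genuine content is pushed into the integrality of the basis itself.

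The construction of this integral echelon basis is the heart of the matter and is where the work of Duke and Jenkins enters. I would build it from integral generators: the weight $\tfrac12$ theta series $\theta(\tau)=\sum_n q^{n^2}$ together with the level-one forms $E_4,E_6,\Delta,j$, all of which have integral coefficients. Multiplying a plus-space form by an integral-weight modular form with integral coefficients and then projecting back to the plus space produces new plus-space forms with shifted pole orders, raising the pole order while keeping the leading coefficient a unit. Starting from one integral form realizing the deepest required pole and iterating, one obtains for every admissible exponent an integral plus-space form with unit leading coefficient, and since the leading coefficients are units, Gaussian elimination over $\Z$ clears the remaining pivot positions and yields the reduced echelon basis $\{f_m\}$ without leaving the integers. \emph{I expect this integrality claim to be the main obstacle}, because the naive operations (multiplication by $j$, or the Cohen--Eisenstein series) do not respect the plus-space support condition modulo $4$; carrying the integral module structure over $\Z[j]$ through the Kohnen projection, whose normalizing constants threaten to introduce denominators, is the delicate step.

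For part (1) the weight $\tfrac32-k$ is negative, so there are no holomorphic forms and $\dim M_{\frac32-k}=0$. Hence every pivot of the echelon basis sits at a negative exponent, i.e.\ at a pole. Writing $F=\sum_m \lambda_m f_m$ as a finite linear combination, the reduced echelon shape forces $\lambda_m=c_F(-m)$, the value of $F$ at the pivot position $q^{-m}$; these are all principal-part coefficients and hence integral by hypothesis. Since the $f_m$ are integral, so is $F$, and part (1) follows.

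For part (2) the weight $\tfrac12+k$ is positive and $M_{\frac12+k}\neq\{0\}$, so the echelon basis has finitely many holomorphic pivots in addition to the pole pivots. I first subtract the integral combination $\sum_{m>0}c_F(-m)f_m$ dictated by the principal part, which is integral by the argument of part (1) and leaves a holomorphic form $H\in M_{\frac12+k}$. The holomorphic pivots split into the cuspidal ones and the constant-term (Eisenstein) pivot; subtracting the combination of the integral cuspidal echelon forms that matches $H$ at the cuspidal pivots defines a cusp form $G\in S_{\frac12+k}$ absorbing all cuspidal freedom. What remains is the integrality of the constant-term coefficient of $F$, which is the truly delicate point since a cusp form cannot correct it. Here I would invoke the duality between weights $\tfrac12+k$ and $\tfrac32-k$: pairing $F$ against the integral basis $\{g_m\}$ from part (1), the vanishing of the constant Fourier coefficient of the weight-$2$ products $F g_m$ on $\SL_2(\Z)$ gives the relations $\sum_n c_F(n) c_{g_m}(-n)=0$. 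Feeding in the integral negative coefficients of $F$ together with the integral coefficients of the $g_m$, these relations express the higher coefficients of $F$ integrally and isolate the remaining non-integral freedom in exactly the cuspidal directions removed by $G$, so that $F-G$ is integral.
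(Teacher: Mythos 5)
Your part (1) is exactly the paper's argument: both proofs invoke the Duke--Jenkins reduced echelon basis $f_{k,m}=q^{-m}+O(q^{A+1})$, $m\ge -A$, of the plus space $M^!_{\frac32-k}$ with integral coefficients, expand $F$ in it, and observe that since the weight is negative one has $A<0$, so every pivot sits at a pole and the expansion coefficients are the principal-part coefficients of $F$, integral by hypothesis. The paper, like you, outsources the integrality of this basis entirely to \cite{dukejenkins}. One reassurance on the point you single out as the main obstacle: no Kohnen projection is needed in that construction, because Duke and Jenkins generate the bases by multiplying explicit integral seed forms by polynomials in $j(4\tau)$, and multiplication by a series in $q^4$ preserves both $\Gamma_0(4)$-modularity and the plus-space support condition modulo $4$ on the nose, so the denominators you worry about never arise.

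For part (2) the paper only says ``analogous,'' and your attempt to do better exposes a genuine problem --- though it is a problem with the statement rather than a repairable defect of your argument. The duality relations $\sum_n c_F(n)c_{g_m}(-n)=0$ cannot force $c_F(0)\in\Z$: each relation ties $c_F(0)$ to finitely many equally unknown positive-index coefficients of $F$, and no argument can succeed, because $F=\sqrt{2}\,H_{k+\frac12}$ (an irrational multiple of Cohen's Eisenstein series) lies in $M^!_{\frac12+k}$, has empty and hence vacuously integral principal part, and has irrational constant term, which no cusp form can correct. So the ``non-integral freedom'' is not confined to the cuspidal directions, contrary to your last sentence. The statement becomes true, and your echelon argument proves it verbatim, if one either replaces ``cusp form $G\in S_{\frac12+k}$'' by ``holomorphic modular form $G\in M_{\frac12+k}$'' (take $G=\sum_{-A\le m\le 0}c_F(-m)f_{k,m}$, so that $F-G=\sum_{m\ge 1}c_F(-m)f_{k,m}$ is integral) or restricts the conclusion to the coefficients of positive index; the latter is all that is used in Section 3, where only the coefficients $c(N)$ with $N\ge 1$ enter the Shimura lift.
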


\begin{proof}
	We only show the first claim, since the second one can be proved analogously. Let $A$ denote the maximal order of vanishing at $\infty$ of elements in $M_{\frac{3}{2}-k}^!$. Note that $A < 0$ since $k \geq 2$. In \cite{dukejenkins}, the authors construct a basis of $M_{\frac{3}{2}-k}^!$ of the form
	\[
	F_{k,m}(\tau) = q^{-m} + \sum_{n > A}c_k(m,n)q^n, \qquad (m \geq -A),
	\]
	with integral coefficients $c_k(m,n) \in \Z$. We have 
	\[
	F = \sum_{m \geq -A}c_F(-m)F_{k,m}
	\]
	since the difference of the two sides has vanishing order at $\infty$ bigger than $A$, and hence vanishes identically. Since $A < 0$, the above linear combination involves only the coefficients $c_F(n)$ with $n < 0$, which are integral by assumption. This implies that $F$ has integral coefficients.
\end{proof}

\subsection{Hecke operators}
\label{section hecke operators}

	Let $k \in \Z$ with $k \geq 2$ as before. For $m \in \N$ the Hecke operator $T_m$ acts on weakly holomorphic or meromorphic modular forms $f$ of weight $2k$ by
	\begin{align}\label{eq hecke operator integral weight}
	f|T_m = \sum_{n \gg -\infty}\sum_{d \mid (m,n)}d^{2k-1}c_f\left( mn/d^2\right)q^n.
	\end{align}
	The Hecke operators $T_m$ are multiplicative, and for prime power index satisfy the recursion
	\begin{align}\label{eq hecke recursion integral weight}
	T_{p^{\ell+1}} = T_{p^\ell}T_p-p^{2k-1}T_{p^{\ell-1}}.
	\end{align}
	By an analogous computation as in the proof of equation (36) in \cite{zagiereisenstein}, one can show that $T_{p}$ acts on $f_{k,d,D}$ by
	\begin{align}\label{eq hecke action fkd}
	f_{k,d,D}|T_p = f_{k,p^2 d,D} + \left( \frac{d}{p}\right)p^{k-1}f_{k,d,D} + p^{2k-1}f_{k,d/p^2,D}.
	\end{align}
	In particular, we see that $f_{k,d,D}|T_m$ is a linear combination of the functions $f_{k,n^2d,D}$ for suitable $n \in \Q$.
	
	Following \cite{kohnenhalfintegral}, for a prime $p$ we define the Hecke operator $T_{p^2}$ on weakly holomorphic modular forms $F \in M_{\frac{1}{2}+k}^!$ of positive half-integral weight by
	\[
	F|T_{p^2} = \sum_{(-1)^k n\equiv 0,1 (4)}\left(c_F(p^2 n) + \left( \frac{(-1)^k n}{p}\right)p^{k-1}c_F(n) + p^{2k-1}c_F(n/p^2) \right)q^n.
	\]
	We remark that one should more precisely denote this operator by $T_{p^2}^+$ to distinguish it from the Hecke operator on the full space of half-integral weight modular forms (not necessarily satisfying the Kohnen plus space condition). Since we are only working with the plus space, we permit ourselves this slight abuse of notation. Similarly, we define the action of $T_{p^2}$ on modular forms $F \in M_{\frac{3}{2}-k}^!$ of negative half-integral weight by
	\begin{align}\label{eq hecke action half-integral}
	F|T_{p^2} = \sum_{(-1)^k n\equiv 0,3 (4)}\left(p^{2k-1}c_F(p^2 n) + \left( \frac{(-1)^{k+1} n}{p}\right)p^{k-1}c_F(n) + c_F(n/p^2) \right)q^n.
	\end{align}
	This formula is obtained by taking the weight $\frac{1}{2}+k$ Hecke operator (allowing negative $k$ for the moment), and then renormalizing by multiplying with $p^{1-2k}$. Here we follow the convention of \cite{dukejenkins}. Note that $T_{p^2}$ acts on harmonic Maass forms in $H_{\frac{3}{2}-k}$ by an analogous formula. For both $M_{\frac{1}{2}+k}^!$ and $H_{\frac{3}{2}-k}$ we define the Hecke operator of prime power index by the recursion
	\begin{align}\label{eq hecke recursion half-integral weight}
	T_{p^{2 \ell + 2}} = T_{p^{2\ell}}T_{p^2} - p^{2k-1}T_{p^{2\ell - 2}},
	\end{align}
	and we define $T_{m^2}$ for $m \in \N$ multiplicatively. This is precisely the same recursion as \eqref{eq hecke recursion integral weight} for the integral weight $2k$ Hecke operator. 
	
	It follows from the action \eqref{eq hecke action half-integral} on the Fourier expansion that $T_{p^2}$ acts on the Maass-Poincar\'e series $P_{\frac{3}{2}-k,n}$ by
	\begin{align}\label{eq hecke action poincare series}
	P_{\frac{3}{2}-k,n}|T_{p^2} = P_{\frac{3}{2}-k,p^2n} + \left( \frac{(-1)^{k}n}{p}\right)p^{k-1}P_{\frac{3}{2}-k,n} + p^{2k-1}P_{\frac{3}{2}-k,n/p^2}.
	\end{align}
	Notice the analogy to the action \eqref{eq hecke action fkd} of $T_p$ on $f_{k,d,D}$.

\subsection{The Fourier expansion of $f_{k,d,D}$}
\label{section fourier expansions}

Let $k \in \Z$ with $k \geq 2$. We write
\[
f_{k,d,D}(z) = \sum_{n \geq 1}c_{f_{k,d,D}}(n)q^{n}
\]
for the Fourier expansion of $f_{k,d,D}$, which converges for $\imag(z) > \frac{\sqrt{|dD|}}{2}$. We start with the well-known expansion in terms of Sali\'e sums and Bessel functions, whose computation can be found in \cite{bengoecheapaper}, Proposition 2.2.

\begin{proposition}\label{proposition fourier expansion bessel}
For $n \geq 1$ we have
\[
c_{f_{k,d,D}}(n) = C_{k,d,D}\frac{2^{k+\frac{1}{2}}\pi^{k+1}}{(k-1)!}(-1)^k  |dD|^{\frac{1}{4}-\frac{k}{2}} n^{k-\frac{1}{2}}\sum_{\substack{a \geq 1}}a^{-\frac{1}{2}}S_{a,d,D}(n)I_{k-\frac{1}{2}}\left( \frac{\pi n\sqrt{|dD|}}{a}\right),
\]
with the constant $C_{k,d,D}$ defined in \eqref{eq normalizing constant}, the $I$-Bessel function $I_\nu(x)$, and the Sali\'e sum
\[
S_{a,d,D}(n) := \sum_{\substack{b (2a) \\ b^{2} \equiv dD (4a)}}\chi_D\left(\left[a,b,\frac{b^2-dD}{4a}\right]\right) e\left(\frac{nb}{2a}\right).
\]
\end{proposition}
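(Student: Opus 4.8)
The plan is to compute the Fourier expansion by the classical method of first splitting the sum over forms according to the leading coefficient and then applying Poisson summation to the resulting sum over integer translates. First I would write each positive definite form of discriminant $dD < 0$ as $Q = [a,b,c]$ with $a \geq 1$ and $b^2 - 4ac = dD$, so that $Q(z,1) = a(z - z_Q)(z - \overline{z_Q})$ with the CM point $z_Q = \frac{-b + i\sqrt{|dD|}}{2a}$. A direct check shows that the forms with fixed $a$ and fixed residue $b \bmod 2a$ (subject to $b^2 \equiv dD \pmod{4a}$) are exactly the $[a, b + 2a\ell, c_\ell]$ for $\ell \in \Z$, and that $[a,b+2a\ell,c_\ell](z,1) = Q_0(z+\ell,1)$ for the representative $Q_0 = [a,b,(b^2-dD)/(4a)]$. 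Since these forms are $\Gamma$-equivalent via $z \mapsto z+\ell$ and $\chi_D$ is a class function, I would pull $\chi_D(Q_0)$ out as a constant, reducing the sum (rearranged using absolute convergence for $k \geq 2$) to $\sum_{a \geq 1}\sum_{b(2a),\, b^2 \equiv dD(4a)} \chi_D(Q_0)\sum_{\ell \in \Z} Q_0(z+\ell,1)^{-k}$.

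Next I would compute the Fourier expansion of the $1$-periodic function $\sum_{\ell} Q_0(z+\ell,1)^{-k}$ by unfolding the period integral, so that its $n$-th coefficient equals $a^{-k}\int_{\imag(s) = y}(s - z_Q)^{-k}(s - \overline{z_Q})^{-k} e(-ns)\,ds$ along the horizontal line at height $y = \imag(z)$. For $y > \tfrac{1}{2}\sqrt{|dD|}$ both poles $z_Q$ and $\overline{z_Q}$ lie below this contour. When $n \leq 0$ one closes the contour upward and obtains $0$, consistent with $f_{k,d,D}$ decaying like a cusp form; when $n > 0$ one closes downward and collects the two residues at the order-$k$ poles $z_Q$ and $\overline{z_Q}$, picking up the orientation factor $-2\pi i$.

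The decisive step is the residue evaluation. The pole at $z_Q$ produces the exponentially growing factor $e(-nz_Q) = e(nb/(2a))\,e^{\pi n\sqrt{|dD|}/a}$, while the pole at $\overline{z_Q}$ produces the decaying $e(-n\overline{z_Q}) = e(nb/(2a))\,e^{-\pi n\sqrt{|dD|}/a}$, each multiplied by a polynomial of degree $k-1$ in $n$ coming from the Leibniz expansion of the $(k-1)$-st derivative. I would then recognize this finite combination of growing and decaying exponentials with polynomial coefficients as precisely the half-integer Bessel function: using the elementary closed form of $I_{k-\frac12}$ one reads off the factor $I_{k-\frac12}\!\left(\pi n\sqrt{|dD|}/a\right)$ together with a prefactor proportional to $n^{k-\frac12}(\sqrt{|dD|})^{\frac12 - k}$ and the common phase $e(nb/(2a))$. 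This phase then factors out, and summing over $b \bmod 2a$ against $\chi_D([a,b,(b^2-dD)/(4a)])$ yields exactly the Salié sum $S_{a,d,D}(n)$; the leftover $a^{-k}$ from the leading coefficient combines with $(\sqrt{|dD|})^{\frac12-k}$ to give the $a^{-\frac12}|dD|^{\frac14 - \frac{k}{2}}$ appearing in the claim.

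The main obstacle is the bookkeeping in this last step: executing the order-$k$ residue, merging the two poles, and matching the resulting numerical constant against the normalization of $I_{k-\frac12}$ to recover the exact factor $\frac{2^{k+\frac12}\pi^{k+1}}{(k-1)!}(-1)^k$ with the correct powers of $2$, $\pi$, $|dD|$ and the sign $(-1)^k$. Everything else — convergence, the grouping by $(a,b)$, and the Poisson unfolding — is routine. Since this expansion is classical, I note that one can alternatively reach the same finite exponential sum by partial fractions in $\ell$ followed by the Lipschitz formula $\sum_{\ell}(w+\ell)^{-j} = \frac{(-2\pi i)^j}{(j-1)!}\sum_{n \geq 1} n^{j-1} e(nw)$ for $\imag(w) > 0$, which avoids shifting the contour but requires the identical constant-tracking to identify $I_{k-\frac12}$.
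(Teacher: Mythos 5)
Your outline is correct and coincides with the standard derivation: the paper gives no proof of this proposition itself but cites Bengoechea, Proposition~2.2, where the expansion is obtained by precisely this grouping of forms by $(a, b \bmod 2a)$, Poisson summation/residue evaluation at the two order-$k$ poles $z_Q$, $\overline{z_Q}$, and identification of the resulting exponential polynomial with the elementary closed form of $I_{k-\frac12}$. The step you leave implicit --- tracking the exact constant $\frac{2^{k+\frac12}\pi^{k+1}}{(k-1)!}(-1)^k$ --- is routine bookkeeping, and your accounting of the powers of $a$ and $|dD|$ (via $\imag z_Q = \sqrt{|dD|}/2a$) already matches the stated formula.
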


Next, we write the Fourier coefficients of $f_{k,d,D}$ in terms of the coefficients of the Maass-Poincar\'e series of half-integral weight defined in Section~\ref{section harmonic maass forms}. 

\begin{proposition}\label{proposition fourier expansion half-integral}
	For $n \geq 1$ we have
	\begin{align*}
	c_{f_{k,d,D}}(n) &= 
	\begin{dcases} 
	(-1)^{\left\lfloor\frac{k}{2}\right\rfloor+1} n^{2k-1}\sum_{m|n} \left(\frac{D}{m}\right) m^{-k} c_{P_{\frac{3}{2}-k,|d|}}^+\left(\frac{n^{2}|D|}{m^{2}}\right), &  \text{if } (-1)^k d> 0, \\
	(-1)^{\left\lfloor\frac{k}{2}\right\rfloor} \ell^{2k-1}\sum_{m|n} \left(\frac{D}{m}\right) m^{k-1} c_{P_{\frac{1}{2}+k,|d|}}\left(\frac{n^{2}|D|}{m^{2}}\right) ,& \text{if } (-1)^k d < 0,
	\end{dcases}
	\end{align*}
	where we write $d = \ell^2 d_0$ with a fundamental discriminant $d_0$ and $\ell \in \N$, and $c_{P_{\frac{3}{2}-k,|d|}}^+(n)$ denotes the $n$-th coefficient of the holomorphic part of the harmonic Maass form $P_{\frac{3}{2}-k,|d|}$.
\end{proposition}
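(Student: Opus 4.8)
The plan is to identify the two Fourier expansions of $f_{k,d,D}$: the one in terms of Salié sums and $I$-Bessel functions furnished by Proposition~\ref{proposition fourier expansion bessel}, and a second one obtained by unfolding the right-hand side of the asserted formula into the Kloosterman-sum expansions of the half-integral weight Maass-Poincaré series. First I would recall (e.g. from \cite{bringmannono}) that the relevant Fourier coefficients of $P_{\frac{3}{2}-k,|d|}$ and $P_{\frac{1}{2}+k,|d|}$ at a positive index $N$ are given by a sum over moduli $c$ with $4 \mid c$ of a half-integral weight Kloosterman sum $K_\kappa(|d|,N;c)$ times an $I$-Bessel function $I_{k-\frac{1}{2}}\!\left(\frac{4\pi\sqrt{|d|N}}{c}\right)$, of the very same index $k-\frac{1}{2}$ that appears in Proposition~\ref{proposition fourier expansion bessel}. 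This coincidence of Bessel indices is exactly what makes a term-by-term comparison feasible.

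Next I would substitute $N = n^2|D|/m^2$ into the Poincaré coefficient $c_P(N)$. The Bessel argument then becomes $\frac{4\pi}{c}\sqrt{|d|\cdot n^2|D|/m^2} = \frac{4\pi n\sqrt{|dD|}}{mc}$, which matches the argument $\frac{\pi n\sqrt{|dD|}}{a}$ of the Bessel functions in Proposition~\ref{proposition fourier expansion bessel} precisely when (after normalizing the modulus) $a = mc/4$. I would therefore introduce $a$ as a new summation variable, interchange the sums over $m \mid n$ and over $c$, and re-index. After this manipulation both sides become single sums over $a \geq 1$ of an arithmetic factor times $I_{k-\frac{1}{2}}\!\left(\frac{\pi n\sqrt{|dD|}}{a}\right)$, so the proposition reduces to an arithmetic identity asserting that, for each fixed $a$, the twisted divisor sum
\[
\sum_{m \mid (n,a)} \left(\frac{D}{m}\right) m^{s}\, K_\kappa\!\left(|d|,\frac{n^2|D|}{m^2};\frac{4a}{m}\right), \qquad s \in \{-k,\,k-1\},
\]
equals, up to the explicit constant and sign recorded in the statement, the Salié sum $S_{a,d,D}(n)$.

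This arithmetic identity is the heart of the matter and the main obstacle. The key tool is Salié's explicit evaluation of the half-integral weight Kloosterman sum as a Gauss-sum-weighted exponential sum, that is, as a Salié-type sum; this is the source of the ``well-known relations between Kloosterman and Salié sums'' underlying the Shimura correspondence (compare \cite{grosskohnenzagier, kohnenhalfintegral}). Concretely, one rewrites $K_\kappa$ as a twisted sum over residues $b \pmod{2a}$ with $b^2 \equiv dD$, absorbs the genus character $\chi_D$ via the factor $\left(\frac{D}{m}\right)$ using the multiplicativity of both $\chi_D$ and the Kloosterman sum, and verifies that summing over $m \mid (n,a)$ reconstitutes the full Salié sum $S_{a,d,D}(n)$ over all such $b$. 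The two cases $(-1)^k d > 0$ and $(-1)^k d < 0$ use the Poincaré series of weight $\frac{3}{2}-k$ and $\frac{1}{2}+k$, whose Kloosterman sums differ by exactly the renormalization by $p^{1-2k}$ recorded in Section~\ref{section hecke operators}; this accounts for the factor $\ell^{2k-1}$ and for the two different exponents $m^{-k}$ versus $m^{k-1}$. Finally, I would track the powers of $i$ in the theta multiplier system, which combine into the sign $(-1)^{\lfloor k/2\rfloor+1}$ (resp. $(-1)^{\lfloor k/2\rfloor}$), and conclude by collecting all remaining constants from $C_{k,d,D}$, the Gauss sums, and the normalizations to check that the two expansions agree on the nose. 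The constant and sign bookkeeping, rather than any conceptual difficulty, is where I expect the real effort to lie.
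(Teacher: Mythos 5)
Your proposal follows essentially the same route as the paper: the proof there also plugs the Kloosterman-sum/Bessel expansion of the half-integral weight Maass--Poincar\'e series into Proposition~\ref{proposition fourier expansion bessel} and matches terms via the Kloosterman--Sali\'e relation, which it cites from Duke--Imamo\={g}lu--T\'oth (Proposition~3) rather than re-deriving, and it handles the case $(-1)^kd>0$ by reference to Lemma~7.2 of \cite{loebrichschwagenscheidt}. The identity you single out as the heart of the matter is exactly the cited one, so your outline is correct and differs only in proposing to prove that lemma from scratch.
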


\begin{proof}
The formula in the first case has been proven in \cite{loebrichschwagenscheidt}, Lemma~7.2, where more details can be found.
For the second case, we can rewrite the Fourier expansion of the half-integral weight Maass-Poincar\'e series given in \cite{bringmannono}, Theorem~2.1, as
\begin{align}\label{maasspoincareexpansion}
c_{P_{\frac{1}{2}+k,m}}\left(n\right)=
(-1)^{\lfloor\frac{k+1}{2}\rfloor}\pi\sqrt{2}\left(\frac{n}{m}\right)^{\frac{k}{2}-\frac14}\sum_{a\geq 1} 
\frac{K^+((-1)^{k+1}m,(-1)^kn, a)}{a}I_{k-\frac{1}{2}}\left(\frac{\pi\sqrt{mn}}{a}\right)
\end{align}
with the half-integral weight Kloosterman sum 
\begin{align*}
K^+(m,n,a):= \frac{1-i}{4}\left(1+\left(\frac{4}{a}\right)\right)\sum_{j\text{ mod}^*(4a)}\left(\frac{4a}{j}\right)\left(\frac{-4}{j}\right)^{\frac12}e\left(\frac{mj+n\overline{j}}{4a}\right).
\end{align*}
Now we can apply the identity between Kloosterman and Sali\'e sums 
\begin{align}\label{salieidentity}
S_{a,d,D}(n) = \sum_{m|(n,a)}\left(\frac{
D}{m}\right)\sqrt{\frac{m}{a}}K^+\left(d,\frac{n^2D}{m^2},\frac{a}{m}\right) 
\end{align}
given in \cite{dukeimamoglutoth}, Proposition 3 (note that the Kloosterman and Sali\'e sums are normalized slightly differently there). Plugging \eqref{maasspoincareexpansion} and \eqref{salieidentity} into the Fourier expansion of $f_{k,d,D}$ from Proposition~\ref{proposition fourier expansion bessel} above yields the result. 
\end{proof}

Finally, we give an expansion for $f_{k,d,D}$ in terms of traces of CM values of derivatives of Maass-Poincar\'e series. For $n \in \N$ we let $P_{2-2k,n}$ be the unique harmonic Maass form of weight $2-2k$ for $\Gamma$ with $P_{2-2k,n} = q^{-n}+O(1)$. It can be constructed explicitly as a Maass Poincar\'e series as in \cite{bruinierhabil}, Section~1.3. We recall this construction in the proof of Proposition~\ref{proposition fourier expansion CM} below. Furthermore, for $\kappa \in \frac{1}{2}\Z$ we consider the differential operator
\[
\partial_\kappa := \frac{1}{2\pi i} \frac{\partial}{\partial z}-\frac{\kappa}{4\pi y},
\]
which raises the weight of a Maass form of weight $\kappa$ by $2$. Note that $\partial_\kappa = -\frac{1}{4\pi}R_{\kappa}$ with the usual Maass raising operator $R_{\kappa} := 2i\frac{\partial}{\partial z} + \frac{\kappa}{y}$. We define the iterated version (raising from weight $2-2k$ to weight $0$) by
\[
\partial^{k-1} := (-1)^{k-1}\partial_{-2} \circ \dots \circ\partial_{2-2k}.
\]
We define the trace of CM values of a $\Gamma$-invariant function $f$ by
\[
\tr_{d,D}(f) := \sum_{Q \in \mathcal{Q}_{dD}}\chi_D(Q)\frac{f(z_Q)}{w_Q},
\]
where $w_Q$ is the order of the stabilizer of $Q$ in $\overline{\Gamma} = \mathrm{PSL}_2(\Z)$. Furthermore, for a function $f$ transforming like a modular form of weight $2-2k$ for $\Gamma$ we consider the normalized\footnote{Our normalization slightly differs from the normalization in \cite{dukejenkins}.} trace
\begin{align*}
\tr_{d,D}^*(f) &:= C_{k,d,D}\frac{(2\pi)^k}{(k-1)!}(-1)^k|dD|^{-\frac{k}{2}}\tr_{d,D}(\partial^{k-1}f),
\end{align*}
with the constant $C_{k,d,D}$ defined in \eqref{eq normalizing constant}. We remark that, by the results of \cite{dukejenkins}, the traces $\tr_{d,D}^*(f)$ are integers if $f$ is a weakly holomorphic modular form with integral Fourier coefficients.

\begin{proposition}\label{proposition fourier expansion CM}
	For $n\geq 1$ we have
	\[
	c_{f_{k,d,D}}(n) = \tr_{d,D}^*\left(P_{2-2k,n}\right).
	\]
\end{proposition}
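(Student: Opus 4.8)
The goal is to show that the $n$-th Fourier coefficient of $f_{k,d,D}$ equals the normalized trace $\tr_{d,D}^*(P_{2-2k,n})$ of CM values of the iterated raising of the Maass--Poincar\'e series $P_{2-2k,n}$. The plan is to compute the right-hand side explicitly and match it against the Bessel-function expansion already established in Proposition~\ref{proposition fourier expansion bessel}. First I would recall the explicit construction of $P_{2-2k,n}$ as a Maass--Poincar\'e series of weight $2-2k$ for $\Gamma$ (as in \cite{bruinierhabil}, Section~1.3), writing it as a sum over $\Gamma_\infty \backslash \Gamma$ of a seed function built from a Whittaker function and the character $e(-nz)$. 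Applying the iterated raising operator $\partial^{k-1}$ turns this into a weight-$0$ object, and crucially, since raising commutes with the slash action, $\partial^{k-1}P_{2-2k,n}$ is again a Poincar\'e-type series whose seed is an explicit derivative of the Whittaker function.

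The key computational step is to evaluate the trace $\tr_{d,D}(\partial^{k-1}P_{2-2k,n}) = \sum_{Q}\chi_D(Q)\, \partial^{k-1}P_{2-2k,n}(z_Q)/w_Q$ by unfolding. Substituting the Poincar\'e series expansion and unfolding the sum over $\Gamma_\infty\backslash\Gamma$ against the sum over $Q \in \mathcal{Q}_{dD}$, I expect the orbit of CM points under $\Gamma$ to reorganize into sums over residue classes $b \pmod{2a}$ with $b^2\equiv dD\pmod{4a}$, precisely the data defining the Sali\'e sum $S_{a,d,D}(n)$. The genus character $\chi_D(Q)$ should collect into the character appearing in the Sali\'e sum, and the archimedean part of the seed, evaluated at the imaginary parts $\imag(z_Q) = \sqrt{|dD|}/(2a)$ of the CM points, should produce the $I$-Bessel function $I_{k-\frac{1}{2}}\!\left(\pi n \sqrt{|dD|}/a\right)$ after the weight-raising derivatives are carried out. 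This is the mechanism by which a trace of CM values of a weight-$2-2k$ Poincar\'e series becomes a Sali\'e-sum expansion of the same shape as in Proposition~\ref{proposition fourier expansion bessel}.

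The main obstacle will be the bookkeeping in this unfolding: tracking exactly how the iterated operator $\partial^{k-1}$ acts on the Whittaker/Bessel seed and confirming that the resulting special function, evaluated at CM points of height proportional to $1/a$, collapses to $I_{k-\frac{1}{2}}$ with the correct argument, together with matching all the normalizing constants (the factors of $\pi$, $(k-1)!$, $|dD|^{\pm k/2}$, and the sign $(-1)^k$) against the definition of $\tr_{d,D}^*$ and the constant $C_{k,d,D}$. Once the archimedean factor and the arithmetic (Sali\'e) factor are identified, I would simply verify that the prefactors in $\tr_{d,D}^*(P_{2-2k,n})$—namely $C_{k,d,D}\frac{(2\pi)^k}{(k-1)!}(-1)^k|dD|^{-k/2}$ times the constant emerging from the trace computation—reproduce the prefactor $C_{k,d,D}\frac{2^{k+\frac12}\pi^{k+1}}{(k-1)!}(-1)^k|dD|^{\frac14-\frac{k}{2}}n^{k-\frac12}$ of Proposition~\ref{proposition fourier expansion bessel}. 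Since both sides are then term-by-term equal as series in $a$, the identity follows. I anticipate that invoking the explicit evaluation of CM points and the known relation between raising operators and Bessel functions (as developed in the cited works of Bruinier and of Duke--Jenkins) will let me avoid grinding through the Whittaker-function derivatives from scratch.
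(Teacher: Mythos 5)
Your proposal is correct and follows essentially the same route as the paper: construct $P_{2-2k,n}$ explicitly as a Whittaker--seed Poincar\'e series, use the relation $\partial_\kappa P_{\kappa,n,s} = -n(s+\tfrac{\kappa}{2})P_{\kappa+2,n,s}$ to write $\partial^{k-1}P_{2-2k,n}$ as a weight-$0$ Poincar\'e series with $I$-Bessel seed, unfold the trace over $\Gamma_\infty\backslash\Gamma$ against $\mathcal{Q}_{dD}/\Gamma$ (picking up the factor $w_Q$ and the representatives $b\pmod{2a}$, $b^2\equiv dD\pmod{4a}$ that form the Sali\'e sum), and match constants against Proposition~\ref{proposition fourier expansion bessel}. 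The remaining work you flag (Whittaker derivatives and normalizations) is exactly what the paper carries out, via the identity $\mathcal{M}_{0,s}(v) = 2^{2s-1}\Gamma(s+\tfrac12)v^{1/2}I_{s-1/2}(v/2)$ and Legendre duplication.
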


\begin{proof}
	Following \cite{bruinierhabil}, Section 1.3, for $\kappa \in \Z, s \in \C$ and $v > 0$ we define
\[
\mathcal{M}_{\kappa,s}(v) := v^{-\frac{\kappa}{2}}M_{-\frac{\kappa}{2},s-\frac{1}{2}}(v)
\]
with the usual $M$-Whittaker function. Note that for $\kappa = 0$ we have
	\[
	\mathcal{M}_{0,s}(v) = 2^{2s-1}\Gamma\left(s+\frac{1}{2}\right)v^{\frac{1}{2}}I_{s-\frac{1}{2}}\left(\frac{v}{2}\right)
	\]
	with the $I$-Bessel function. For $n > 0$ we consider the Maass Poincar\'e series
	\[
	P_{\kappa,n,s}(z) := \frac{1}{\Gamma(2s)}\sum_{M \in \Gamma_{\infty}\backslash \Gamma}\mathcal{M}_{\kappa,s}(4\pi ny)e(-nx)|_{\kappa}M, \qquad (z = x+ iy \in \H),
	\]
	where $\Gamma_\infty$ is the subgroup generated by $\pm\left(\begin{smallmatrix} 1 & 1 \\ 0 & 1 \end{smallmatrix}\right)$.	If $\kappa < 0$, then the special value $P_{\kappa,n}:=P_{\kappa,n,1-\frac{\kappa}{2}}$ defines a harmonic Maass form of weight $\kappa$ with $P_{\kappa,n} = q^{-n}+O(1)$. The Maass Poincar\'e series of different weights are related by
	\[
	\partial_\kappa P_{\kappa,n,s} = -n\left( s+\frac{\kappa}{2}\right)P_{\kappa+2,n,s},
	\]
	see \cite{bruinierono}, Proposition 2.2. Combining the above results, and applying the Legendre duplication formula $2^{2k-1}\Gamma(k)\Gamma(k+\frac{1}{2}) = \sqrt{\pi}\Gamma(2k)$, we obtain
	\[
	\partial^{k-1} P_{2-2k,n} = 2\pi n^{k-\frac{1}{2}}\sum_{M \in \Gamma_{\infty}\backslash \Gamma} y^{\frac{1}{2}}I_{k-\frac{1}{2}}(2\pi n y)e(-nx)|_{0}M.
	\]
	
	Plugging in the CM points of discriminant $dD$, we find
	\begin{align*}
	&\tr_{d,D}(\partial^{k-1}P_{2-2k,n}) \\
	&= 2\pi n^{k-\frac{1}{2}}\sum_{Q \in \mathcal{Q}_{dD}/\Gamma}\frac{\chi_D(Q)}{w_Q}\sum_{M \in \Gamma_{\infty}\backslash \Gamma} (\imag(Mz_Q))^{\frac{1}{2}}I_{k-\frac{1}{2}}(2\pi n \imag(Mz_Q))e(-n\real(Mz_Q)).
	\end{align*}
	 Note that for $Q = [a,b,c] \in \mathcal{Q}_{dD}$ the associated CM point is given by $z_Q = (-b+i\sqrt{|dD|})/2a$. Moreover, if $M$ runs through $\Gamma_\infty \backslash \Gamma$, then $Mz_Q$ runs $w_Q$ times through the CM points associated to the forms in $[Q]/\Gamma_\infty$, where $[Q]$ denotes the class of $Q$ in $\mathcal{Q}_{dD}/\Gamma$. Thus we obtain
	 \begin{align*}
	 \tr_{d,D}(\partial^{k-1}P_{2-2k,n}) = 2\pi n^{k-\frac{1}{2}}\sum_{[a,b,c] \in \mathcal{Q}_{dD}/\Gamma_\infty}\chi_D(Q) \left(\frac{\sqrt{|dD|}}{2a}\right)^{\frac{1}{2}}I_{k-\frac{1}{2}}\left(\frac{\pi n\sqrt{|dD|}}{a} \right)e\left(\frac{nb}{2a}\right).
	 \end{align*}
	 A system of representatives for $\mathcal{Q}_{dD}/\Gamma_\infty$ is given by the forms $[a,b,c]$ with $a > 0, b \pmod{2a}$ with $b^2 = dD \pmod{4a}$, and $c = (b^2-dD)/4a$. Comparing the resulting expression for $\tr_{d,D}(\partial^{k-1}P_{2-2k,n})$ with the Fourier expansion of $f_{k,d,D}$ given in Proposition~\ref{proposition fourier expansion bessel}, and taking into account the additional factors in the normalized trace $\tr_{d,D}^*(P_{2-2k,n})$, we obtain the stated formula.	 
\end{proof}

\subsection{Interchanging $d$ and $D$} If $d_0$ and $D$ are both fundamental discriminants with $d_0 D < 0$, then it follows from the fact that $\chi_D = \chi_{d_0}$ on $\mathcal{Q}_{d_0 D}$ that 
\[
f_{k,d_0,D} = f_{k,D,d_0}.
\]
For general discriminants $d$ we have the following relation.

\begin{lemma}\label{lemma case change}
	For fundamental discriminants $d_0$ and $D$ with $d_0D < 0$ and $\ell \in \N$ we have
	\[
	f_{k,\ell^2 d_0,D} = \sum_{a|\ell}\left(\frac{D}{a}\right)\sum_{b|\frac{\ell}{a}}\mu(b)\left(\frac{d_0}{b}\right)(ab)^{k-1}f_{k,\left(\frac{\ell}{ab}\right)^2 D,d_0}(z).
	\]
\end{lemma}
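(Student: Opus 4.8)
The plan is to express both sides as Hecke translates of the common base form $f_{k,d_0,D}=f_{k,D,d_0}$ and to reduce the identity to a formal generating-function computation, one prime at a time. First I would fix the fundamental discriminants $d_0,D$ with $d_0D<0$ and abbreviate $g(n):=f_{k,n^2d_0,D}$ and $h(n):=f_{k,n^2D,d_0}$, adopting the convention that $f_{k,d',D}=0$ whenever $d'$ is not a discriminant (so that a term like $f_{k,n^2d_0/p^2,D}$ vanishes when $p\nmid n$). The interchange relation for fundamental discriminants recalled just before the lemma supplies the crucial base case $g(1)=h(1)$. Reading off \eqref{eq hecke action fkd} with $d=n^2d_0$, and using $\left(\frac{n^2d_0}{p}\right)=\left(\frac{d_0}{p}\right)$ for $p\nmid n$ while $\left(\frac{n^2d_0}{p}\right)=0$ for $p\mid n$, one finds for each prime $p$ and each $m$ with $p\nmid m$ that the forms $G_a:=g(p^a m)$ satisfy $G_0|T_p=G_1+\left(\frac{d_0}{p}\right)p^{k-1}G_0$ together with $G_a|T_p=G_{a+1}+p^{2k-1}G_{a-1}$ for $a\geq 1$. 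This is the three-term recursion \eqref{eq hecke recursion integral weight}, except for a shifted first step.

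Next I would encode this by a Chebyshev-type polynomial $X_a^{(p)}$ in $T_p$ defined by $X_0^{(p)}=1$, $X_1^{(p)}=T_p-\left(\frac{d_0}{p}\right)p^{k-1}$, and $X_{a+1}^{(p)}=X_a^{(p)}T_p-p^{2k-1}X_{a-1}^{(p)}$, so that $g(p^a m)=g(m)|X_a^{(p)}$; the analogous operators $Y_a^{(p)}$ with $\left(\frac{D}{p}\right)$ in place of $\left(\frac{d_0}{p}\right)$ satisfy $h(p^a m)=h(m)|Y_a^{(p)}$. Since the Hecke operators for distinct primes commute, iterating over the prime factorization of $\ell$ yields $g(\ell)=f_{k,d_0,D}\,\big|\prod_p X_{\nu_p(\ell)}^{(p)}$ and likewise for $h$, with the \emph{same} base form. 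It therefore suffices to prove a single-prime operator identity between the $X$'s and the $Y$'s. Forming $\mathcal X(x)=\sum_{a\geq 0}X_a^{(p)}x^a$, the recursion gives the closed form $\mathcal X(x)=\frac{1-\left(\frac{d_0}{p}\right)p^{k-1}x}{1-T_p x+p^{2k-1}x^2}$, and similarly $\mathcal Y(x)=\frac{1-\left(\frac{D}{p}\right)p^{k-1}x}{1-T_p x+p^{2k-1}x^2}$; hence $\mathcal X(x)=C_p(x)\,\mathcal Y(x)$ with the elementary multiplier $C_p(x)=\frac{1-\left(\frac{d_0}{p}\right)p^{k-1}x}{1-\left(\frac{D}{p}\right)p^{k-1}x}$, the denominators having cancelled.

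Finally I would extract coefficients. The Cauchy product $\mathcal X=C_p\mathcal Y$ writes each $X_a^{(p)}$ as an explicit combination of the $Y_b^{(p)}$, and expanding $C_p(x)$ as a geometric series shows that its coefficient of $x^m$ equals $p^{m(k-1)}\sum_{ab=p^m}\left(\frac{D}{a}\right)\mu(b)\left(\frac{d_0}{b}\right)$, using that $b\mapsto\mu(b)\left(\frac{d_0}{b}\right)$ is the Dirichlet inverse of the completely multiplicative function $\left(\frac{d_0}{\cdot}\right)$. Multiplying these local factors over all primes, the total coefficient of $h(\ell/M)$ becomes the multiplicative function $M^{k-1}\sum_{ab=M}\left(\frac{D}{a}\right)\mu(b)\left(\frac{d_0}{b}\right)$; reindexing the resulting double sum by the pair $(a,b)$ with $M=ab$ reproduces exactly the asserted formula, since $h(\ell/ab)=f_{k,(\ell/ab)^2D,d_0}$.

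The main obstacle I expect is not the algebra, which is purely formal, but the bookkeeping in the reduction to a single prime: one must justify carefully that the product $\prod_p X_{\nu_p(\ell)}^{(p)}$ of commuting local operators genuinely computes $g(\ell)$. This hinges on the dichotomy $\left(\frac{n^2d_0}{p}\right)=\left(\frac{d_0}{p}\right)$ or $0$ according to whether $p\nmid n$ or $p\mid n$, on the vanishing convention for non-discriminant indices, and on verifying that the base identity $f_{k,d_0,D}=f_{k,D,d_0}$ correctly links the two families so that both $g$ and $h$ are built from the same form.
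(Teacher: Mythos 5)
Your argument is correct, but it proceeds along a genuinely different route from the paper. The paper proves the lemma directly on the level of quadratic forms: it introduces the primitive-form sums $f^0_{k,d,D}$, decomposes $\mathcal{Q}_{\ell^2 d_0 D}$ as the union of $a\cdot\mathcal{Q}^0_{(\ell/a)^2 d_0 D}$ over $a\mid\ell$ using $\chi_D(aQ)=\left(\frac{D}{a}\right)\chi_D(Q)$, swaps $d_0$ and $D$ via $\chi_D=\chi_{d_0}$ on primitive forms, and then re-expands $f^0$ in terms of $f$ by M\"obius inversion. You instead take the Hecke action \eqref{eq hecke action fkd} as the basic input, encode the families $f_{k,n^2d_0,D}$ and $f_{k,n^2D,d_0}$ by Chebyshev-type polynomials in $T_p$, and compare the two local generating functions $\frac{1-(\frac{d_0}{p})p^{k-1}x}{1-T_px+p^{2k-1}x^2}$ and $\frac{1-(\frac{D}{p})p^{k-1}x}{1-T_px+p^{2k-1}x^2}$; the ratio $C_p(x)$ then produces exactly the convolution of $\left(\frac{D}{\cdot}\right)$ with $\mu(\cdot)\left(\frac{d_0}{\cdot}\right)$ appearing in the statement. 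I checked the details you flag as delicate: the shifted first step of the recursion (using $\left(\frac{n^2d_0}{p}\right)=\left(\frac{d_0}{p}\right)$ for $p\nmid n$ and $=0$ otherwise, and the vanishing of $f_{k,m^2d_0/p^2,D}$ for $p\nmid m$ and $d_0$ fundamental), the reduction to one prime via commutativity, and the coefficient extraction; all are sound. The trade-off is that your proof leans on \eqref{eq hecke action fkd}, which the paper only justifies by reference to Zagier's computation and whose derivation is itself a quadratic-form manipulation of the same flavour as the paper's direct proof, whereas the paper's argument is self-contained and also makes transparent where the genus-character identities $\chi_D(aQ)=\left(\frac{D}{a}\right)\chi_D(Q)$ and $\chi_D=\chi_{d_0}$ enter. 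On the other hand, your approach cleanly explains the shape of the answer (a local Euler-factor ratio) and would generalize immediately to any two families satisfying the same Hecke recursions with a common base point.
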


\begin{proof}
We let $\mathcal{Q}^0_{dD}$ be subset of primitive forms in $\mathcal{Q}_{dD}$ and define
$$
f_{k,d,D}^0(z):=C_{k,d,D}\sum_{Q \in \mathcal{Q}^0_{dD}}\chi_D(Q)Q(z,1)^{-k}.
$$ 
Then we have
\begin{align}
\begin{split}\label{sumftilde}
f_{k,\ell^2 d_0,D}(z) &= C_{k,\ell^2 d0,D}\sum_{Q\in\mathcal{Q}_{\ell^2 d_0D}}\chi_{D}(Q)Q(z,1)^{-k}\\
& =\sum_{a|\ell}a^{2k-1}C_{k,\left(\frac{\ell}{a}\right)^2 d_0,D}\sum_{Q\in\mathcal{Q}^0_{\left(\frac{\ell}{a}\right)^2d_0D}}\chi_{D}(aQ)(aQ(z,1))^{-k} \\
&=\sum_{a|\ell}a^{k-1}\left(\frac{D}{a}\right)f^0_{k,\left(\frac{\ell}{a}\right)^2d_0,D}(z)\\
&=\sum_{a|\ell}a^{k-1}\left(\frac{D}{a}\right)f^0_{k,\left(\frac{\ell}{a}\right)^2 D,d_0}(z).
\end{split}
\end{align}
The last identity follows since we have $\chi_D(Q) = \chi_{d_0}(Q)$ for primitive forms $Q$.

On the other hand, expressing $f^0_{k,d,D}$ in terms of $f_{k,d,D}$ gives 
 \begin{align*}
f^0_{k,t^2 D,d_0}(z) &= C_{k,t^2 D,d_0}\sum_{b|t}\mu(b)\sum_{Q\in\mathcal{Q}_{\left(\frac{t}{b}\right)^2 Dd_0}}\chi_{d_0}(bQ)(bQ)(z,1)^{-k}\\
& =\sum_{b|t}\mu(b)\left(\frac{d_0}{b}\right)b^{k-1}f_{k,\left(\frac{t}{b}\right)^2 D, d_0}(z).
\end{align*}
Plugging this back into \eqref{sumftilde} proves the result.
\end{proof}

\section{The proof of Theorem~\ref{theorem integrality}}
\label{section proof theorem integrality}

	Let $(-1)^k d < 0$. By Lemma~\ref{lemma integral basis}, there exists a cusp form $G \in S_{\frac{1}{2}+k}$ such that $P_{\frac{1}{2}+k,|d|}-G$ has integral coefficients. The $D$-th Shimura lift 
	\[
	\mathcal{S}_D(G)(z) := (-1)^{\left\lfloor\frac{k}{2}\right\rfloor}\sum_{n\geq 1}\sum_{m|n} \left(\frac{D}{m}\right) m^{k-1} c_{G}\left(\frac{n^{2}|D|}{m^{2}}\right)  q^n
	\] 
	is a cusp form of weight $2k$, compare \cite{kohnenhalfintegral}. From Proposition~\ref{proposition fourier expansion half-integral} we obtain that
	\[
	\ell^{1-2k}f_{k,d,D}-\mathcal{S}_D(G) = \mathcal{S}_D\left(P_{\frac{1}{2}+k,|d|}-G\right)
	\]
	has integral coefficients. Since $S_{2k}$ has a basis of the form $q^m+O(q^{\dim(S_{2k})+1})$ for $1 \leq m \leq \dim(S_{2k})$ with integral coefficients (see \cite{dukejenkins}, Section 2), we can subtract a cusp form $g$ with integral coefficients to achieve that 
	\[
	\ell^{1-2k}f_{k,d,D}-\mathcal{S}_D(G)-g
	\]
	has integral coefficients which vanish for $1 \leq n \leq \dim(S_{2k})$. Then $g_{k,d,D} = \ell^{2k-1}(\mathcal{S}_D(G)+g)$ is the desired cusp form.
	
	The case $(-1)^k d > 0$ follows from the case $(-1)^k d < 0$ using Lemma~\ref{lemma case change}, concluding the proof.

\section{The proof of Theorem~\ref{theorem hecke translates}}
\label{section proof theorem hecke translates}

We let
\[
F(\tau) = \sum_{n \gg -\infty}c_F(n)q^n \in M_{\frac{3}{2}-k}^!
\]
be a weakly holomorphic modular form of weight $\frac{3}{2}-k$ for $\Gamma_0(4)$ satisfying the Kohnen plus space condition $c_F(n) = 0$ unless $(-1)^k n \equiv 0,3 \pmod 4$. We further assume that the coefficients $c_F(n)$ of negative index $n < 0$ are integers. 

We will obtain Theorem~\ref{theorem hecke translates} as a special case of the following result.

\begin{theorem}\label{theorem halfintegral}
	For a fundamental discriminant $D$ with $(-1)^k D < 0$ the linear combination
\begin{align}\label{eq halfintegral 1}
\sum_{(-1)^k d > 0}c_F(-|d|)f_{k,d,D}
\end{align} has integral Fourier coefficients $c(n)$ which satisfy $n^{k-1}\mid c(n)$ for every $n \geq 1$. More precisely, if we write $n = n_1 n_2$ where every prime divisor of $n_1$ divides $D$, and $n_2$ is coprime to $D$, then the coefficients of \eqref{eq halfintegral 1} satisfy $n_1^{2k-1}n_2^{k-1} \mid c(n)$.
\end{theorem}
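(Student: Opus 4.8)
The plan is to feed the weakly holomorphic form $F$ into the half-integral weight Fourier expansion of Proposition~\ref{proposition fourier expansion half-integral}, use Lemma~\ref{lemma integral basis} for integrality, and then extract the claimed power of $n$ by an elementary manipulation of the resulting divisor sum.

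First I would recognize $F$ as a linear combination of Maass-Poincar\'e series. Since $d\equiv 0,1\pmod 4$ together with $(-1)^k d>0$ forces $(-1)^k|d|\equiv 0,1\pmod 4$, the series $P_{\frac32-k,|d|}$ is available for each such $d$, with holomorphic principal part $q^{-|d|}$. Setting $\tilde F := \sum_{(-1)^k d > 0} c_F(-|d|)P_{\frac32-k,|d|}$, the Kohnen plus-space condition matches the negative-index part of $F$ with the discriminants $d$ satisfying $(-1)^k d>0$, so $\tilde F$ and $F$ have the same holomorphic principal part. To upgrade this to $\tilde F = F$ I would show $\tilde F$ is weakly holomorphic: its shadow $\xi_{\frac32-k}\tilde F$ pairs with every $g\in S_{\frac12+k}$ as a constant times $\sum_{(-1)^k d>0}c_F(-|d|)c_g(|d|)$, which depends only on the holomorphic principal part of $\tilde F$ and therefore equals the corresponding pairing for $F$; the latter vanishes because $F$ is weakly holomorphic (the pairing of $F$ with a cusp form of the dual weight is zero). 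Hence $\xi_{\frac32-k}\tilde F=0$, so $\tilde F-F$ is a weakly holomorphic form of negative weight with no principal part and thus vanishes. In particular $c^+_{\tilde F}(N)=c_F(N)$ for all $N>0$.

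Granting $\tilde F = F$, summing the first case of Proposition~\ref{proposition fourier expansion half-integral} against the weights $c_F(-|d|)$ collapses the inner Poincar\'e coefficients to those of $F$, giving
\[
c(n) = (-1)^{\lfloor k/2\rfloor+1}\, n^{2k-1}\sum_{m\mid n}\left(\frac{D}{m}\right) m^{-k}\, c_F\left(\frac{n^2|D|}{m^2}\right).
\]
By Lemma~\ref{lemma integral basis}(1), $F$ has integral coefficients, and writing $n=m\cdot(n/m)$ converts $n^{2k-1}m^{-k}$ into $m^{k-1}(n/m)^{2k-1}$, exhibiting $c(n)$ as an integer. For the divisibility I would use that $\left(\frac{D}{m}\right)=0$ unless $\gcd(m,D)=1$, so only the divisors $m\mid n_2$ survive. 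For such $m$ the factorization $(n/m)^{2k-1}=n_1^{2k-1}(n_2/m)^{2k-1}$ pulls out $n_1^{2k-1}$, and the identity $m^{k-1}(n_2/m)^{2k-1}=n_2^{k-1}(n_2/m)^k$ pulls out $n_2^{k-1}$, leaving
\[
c(n) = (-1)^{\lfloor k/2\rfloor+1}\, n_1^{2k-1} n_2^{k-1}\sum_{m\mid n_2}\left(\frac{D}{m}\right)(n_2/m)^k\, c_F\left(\frac{n^2|D|}{m^2}\right),
\]
whose remaining sum is an integer. This yields $n_1^{2k-1}n_2^{k-1}\mid c(n)$, and hence $n^{k-1}\mid c(n)$ since $k-1\le 2k-1$.

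I expect the main obstacle to be the first step: justifying that the combination $\tilde F$ of \emph{harmonic} (not weakly holomorphic) Poincar\'e series reproduces $F$ at the level of holomorphic parts. In the negative weight $\frac32-k$ the series $P_{\frac32-k,|d|}$ genuinely carry nonzero non-holomorphic parts, so the equality $c^+_{\tilde F}(N)=c_F(N)$ is not formal and rests on the vanishing of the shadow, i.e.\ on the coefficient pairing $\sum_d c_F(-|d|)c_g(|d|)=0$ for $g\in S_{\frac12+k}$. Once this is established, the remainder is the routine exponent bookkeeping in the divisor sum carried out above.
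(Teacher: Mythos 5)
Your proposal is correct and follows essentially the same route as the paper: both express $F$ as the linear combination $\sum_{(-1)^k d>0}c_F(-|d|)P_{\frac{3}{2}-k,|d|}$ of Maass-Poincar\'e series, apply Proposition~\ref{proposition fourier expansion half-integral} to obtain the divisor-sum formula for $c(n)$, invoke Lemma~\ref{lemma integral basis} for the integrality of all coefficients of $F$, and then extract the powers of $n_1$ and $n_2$ from the surviving terms with $m\mid n_2$. The only difference is that you carefully justify, via the vanishing of the shadow of $\tilde F$, that the combination of genuinely harmonic Poincar\'e series equals $F$ --- a step the paper's proof leaves implicit.
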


\begin{proof}
	By writing $F$ as a linear combination of Maass-Poinacar\'e series, we obtain from Proposition~\ref{proposition fourier expansion half-integral} that the $n$-th coefficient of \eqref{eq halfintegral 1} equals
	\begin{align}\label{eq Fourier coefficients}
	(-1)^{\left\lfloor\frac{k}{2}\right\rfloor+1} n^{2k-1}\sum_{m|n} \left(\frac{D}{m}\right) m^{-k} c_{F}\left(\frac{n^{2}|D|}{m^{2}}\right).
	\end{align}
	Since the coefficients of $F$ of negative index are integral by assumption, all of its coefficients are integral by Lemma~\ref{lemma integral basis}. Hence we see that \eqref{eq Fourier coefficients} is divisible by $n^{k-1}$. Moreover, if we write $n = n_1 n_2$ where every prime divisor of $n_1$ divides $D$ and $n_2$ is coprime to $D$, then the summand of index $m$ in \eqref{eq Fourier coefficients} vanishes unless $m$ divides $n_2$. Hence $n_2^k$ times the divisor sum is an integer. This proves Theorem~\ref{theorem halfintegral}.
\end{proof}

\begin{proof}[Proof of Theorem~\ref{theorem hecke translates}]
	Let $(-1)^k d > 0$. We show that Theorem~\ref{theorem hecke translates} follows from Theorem~\ref{theorem halfintegral} as a special case. More precisely, we show that
	\begin{align}\label{eq 1}
	f_{k,d,D,\underline{\lambda}} = \sum_{(-1)^k \delta > 0}c_{F_d}(-|\delta|)f_{k,\delta,D},
	\end{align}
	where 
	\begin{align}\label{eq F}
	F_d := \sum_{m \geq 1}\lambda_m P_{\frac{3}{2}-k,|d|}|T_{m^2}, 
	\end{align}
	with the Hecke operator $T_{m^2}$ on the space $H_{\frac{3}{2}-k}$ of harmonic Maass forms of weight $\frac{3}{2}-k$ defined in Section~\ref{section hecke operators}. Furthermore, we show that $F$ is weakly holomorphic. 
	
	Let $\Phi_D$ be the linear map sending the Maass-Poincar\'e series $P_{\frac{3}{2}-k,|d|}$ to $f_{k,d,D}$. Using the Hecke actions \eqref{eq hecke action fkd} and \eqref{eq hecke action poincare series}, and the fact that $T_m$ and $T_{m^2}$ are defined by the compatible recursions \eqref{eq hecke recursion integral weight}, \eqref{eq hecke recursion half-integral weight}, we obtain 
	\begin{align*}
	f_{k,d,D,\underline{\lambda}} &= \sum_{m=1}^\infty \lambda_m\Phi_D\left(P_{\frac{3}{2}-k,|d|}\right)|T_{m} \\
	&=\Phi_D\left(\sum_{m=1}^\infty \lambda_m P_{\frac{3}{2}-k,|d|}|T_{m^2}\right) = \Phi_D(F_d) \\
	&= \sum_{(-1)^k\delta > 0}c_{F_d}(-|\delta|)\Phi_D\left(P_{\frac{3}{2}-k,|\delta|}\right) = \sum_{(-1)^k\delta > 0}c_{F_d}(-|\delta|)f_{k,\delta,D},
	\end{align*}
	which yields \eqref{eq 1}. 
	
	It remains to show that the function $F_d$ in \eqref{eq F} is weakly holomorphic, which is equivalent to $\xi_{\frac{3}{2}-k}F_d = 0$. Using $\xi_{\frac{3}{2}-k}(F|T_{m^2}) = (\xi_{\frac{3}{2}-k}(F))|T_{m^2}$ for any $F \in H_{\frac{3}{2}-k}$, which can be checked directly for $T_{p^2}$ on the Fourier expansions, we obtain
	\begin{align}\label{eq xi}
	\xi_{\frac{3}{2}-k}\left(\sum_{m=1}^\infty \lambda_m P_{\frac{3}{2}-k,|d|}|T_{m^2}\right) = \sum_{m=1}^\infty \lambda_m \left(\xi_{\frac{3}{2}-k}P_{\frac{3}{2}-k,|d|}\right)\big|T_{m^2}.
	\end{align}
	Recall from \cite{kohnenhalfintegral} that the Shimura correspondence $\mathcal{S}: S_{\frac{1}{2}+k} \to S_{2k}$ is a Hecke-equivariant isomorphism, that is, we have $\mathcal{S}(F)|T_{m} = \mathcal{S}(F|T_{m^2})$. Hence the function on the right-hand side of \eqref{eq xi} corresponds to
	\[
	\sum_{m=1}^\infty \lambda_m \mathcal{S}\left(\xi_{\frac{3}{2}-k}P_{\frac{3}{2}-k,|d|}\right)\big|T_m \in S_{2k}
	\]
	under the Shimura correspondence. But this sum vanishes since $(\lambda_m)$ is a relation for $S_{2k}$, see \cite{loebrichschwagenscheidt}, Lemma 6.1. Hence also the right-hand side of \eqref{eq xi} vanishes. This shows that the function $F_d$ in \eqref{eq F} is weakly holomorphic. Hence Theorem~\ref{theorem hecke translates} follows from Theorem~\ref{theorem halfintegral} in the case $(-1)^k d > 0$.
	
	Now let $(-1)^k d < 0$. Note that the right-hand side in Lemma~\ref{lemma case change} is an integral linear combination of functions $f_{k,\delta,\Delta}$ with fundamental $(-1)^k \delta > 0$. In particular, after taking Hecke translates, we can conclude the claimed properties of $f_{k,\ell^2 d_0, D,\underline{\lambda}}$ from the properties of $f_{k,a^2 D,d_0, \underline{\lambda}}$ shown above.
\end{proof}

\section{The proof of Proposition~\ref{proposition sign changes}}
\label{section proof sign changes}

In order to prove Proposition~\ref{proposition sign changes} we use the Fourier expansion of $f_{k,d,D}$ from Proposition~\ref{proposition fourier expansion bessel}. We need the following estimates for the Sali\'e sums and Bessel functions appearing in this expansion.

\begin{lemma}\label{lemma salie estimate}
	For every $a,n \in \N$ and every $d,D \in \Z$ with $dD \equiv 0,1 \pmod 4$ we have
	\[
	|S_{a,d,D}(n)| \leq \frac{2\sqrt{2}}{\sqrt{3}}\sqrt{a}.
	\]
\end{lemma}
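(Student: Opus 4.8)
The plan is to bound the Salié sum
\[
S_{a,d,D}(n) = \sum_{\substack{b (2a) \\ b^{2} \equiv dD (4a)}}\chi_D\left(\left[a,b,\tfrac{b^2-dD}{4a}\right]\right) e\left(\frac{nb}{2a}\right)
\]
by controlling its absolute square. First I would write $|S_{a,d,D}(n)|^2 = S_{a,d,D}(n)\overline{S_{a,d,D}(n)}$ as a double sum over two residues $b,b' \pmod{2a}$, both solving $x^2 \equiv dD \pmod{4a}$. The genus character $\chi_D$ is real-valued (it takes values in $\{0,\pm 1\}$), so the product of the two character factors can be combined using the known multiplicativity properties of $\chi_D$; the upshot is that $|S_{a,d,D}(n)|^2$ is again a sum of the same type, and the key point is that the number of solutions $b \pmod{2a}$ to $b^2 \equiv dD \pmod{4a}$ is what ultimately controls the size. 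The standard fact about Salié sums is that $|S_{a,d,D}(n)|$ is, up to the character twist, essentially $\sqrt{a}$ times a bounded factor coming from the local solution count, which is where the explicit constant $\tfrac{2\sqrt 2}{\sqrt 3}$ should emerge.

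The cleanest route is probably to exploit the identity between Salié and Kloosterman sums already recorded in \eqref{salieidentity},
\[
S_{a,d,D}(n) = \sum_{m|(n,a)}\left(\frac{D}{m}\right)\sqrt{\frac{m}{a}}\,K^+\!\left(d,\frac{n^2D}{m^2},\frac{a}{m}\right),
\]
and then invoke the elementary evaluation of the half-integral weight Kloosterman sum $K^+$ as a \emph{Gauss-type} sum. Indeed, Salié sums famously admit an exact evaluation as a finite exponential sum over square roots of the argument, which makes $|K^+|$ directly computable rather than merely estimable. Substituting this evaluation should reduce the bound to counting square roots modulo prime powers, a purely local computation. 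The factor $\tfrac{2\sqrt 2}{\sqrt 3}$ then comes from the worst local contribution, which I expect to be at the primes $2$ and $3$: the $\sqrt{3}$ in the denominator strongly suggests the extremal case is governed by the number of solutions modulo $3$ (or modulo $4$), where the ratio of the solution count to $\sqrt{a}$ is maximized.

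The main obstacle will be getting the constant \emph{exactly} right rather than merely obtaining a bound of the form $|S_{a,d,D}(n)| \ll \sqrt{a}$. A soft bound follows immediately from the Weil-type estimate or from the Gauss-sum evaluation with a crude count, but pinning down $\tfrac{2\sqrt 2}{\sqrt 3}$ requires tracking the multiplicativity of $S_{a,d,D}(n)$ in $a$ and computing the local factors at each prime power $p^\nu \| a$ precisely. I would therefore first establish that $a \mapsto S_{a,d,D}(n)$ factors as a product over prime powers dividing $a$ (using the Chinese Remainder Theorem on the congruence $b^2 \equiv dD \pmod{4a}$ and the corresponding factorization of $\chi_D$ and of $e(nb/2a)$), reducing the problem to bounding each local factor by $\tfrac{2\sqrt 2}{\sqrt 3}\sqrt{p^\nu}$ at the single worst prime and by $\sqrt{p^\nu}$ elsewhere. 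The delicate case-check over the primes $2$ and $3$, distinguishing the behavior according to $dD \bmod 4$ and $\bmod 3$, is where the real work lies; everything else is bookkeeping.
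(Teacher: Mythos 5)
Your proposal never arrives at the one observation that makes this lemma easy: since $\chi_D$ takes values in $\{0,\pm 1\}$ and $|e(nb/2a)|=1$, the triangle inequality already gives
\[
|S_{a,d,D}(n)| \;\leq\; \#\{b \bmod 2a : b^2 \equiv dD \pmod{4a}\},
\]
so the whole problem reduces to counting square roots of $dD$ modulo $4a$ --- no cancellation, no exact evaluation of the Sali\'e sum, and no multiplicativity of the twisted sum itself is needed. This is exactly what the paper does: it quotes the explicit formula for this solution count (a multiplicative function of $a$, from Hirzebruch--Zagier), writing $dD = \Delta_0 f^2$ and bounding the local factor at each prime $p \mid a$. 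The routes you propose instead --- squaring the sum, or evaluating $K^+$ as a Gauss-type sum via the Sali\'e--Kloosterman identity \eqref{salieidentity}, or proving that $a \mapsto S_{a,d,D}(n)$ factors over prime powers together with $\chi_D$ and the exponential --- are all substantially heavier, and none of them is actually carried out; as written, the proposal is a plan with the decisive computation missing.

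There is also a concrete error in where you locate the constant. You predict that $\tfrac{2\sqrt 2}{\sqrt 3}$ arises from ``the single worst prime,'' with $\sqrt{p^\nu}$ at all other primes. In fact the local solution count at a prime $p$ with $\nu_p(a) > 2\nu_p(f)$ carries a factor $1+\bigl(\tfrac{\Delta_0}{p}\bigr) \leq 2$, which is $\leq \sqrt p$ only for $p \geq 5$; the primes $2$ and $3$ each contribute an excess over $\sqrt{p^{\nu_p(a)}}$, namely $\sqrt 2$ and $\tfrac{2}{\sqrt 3}$ respectively, and the stated constant is their \emph{product} $\sqrt 2 \cdot \tfrac{2}{\sqrt 3} = \tfrac{2\sqrt 2}{\sqrt 3}$. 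So even if you completed your local analysis, the bookkeeping you describe (one bad prime, all others clean) would not reproduce the lemma's constant.
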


\begin{proof}
	Estimating every summand by $1$, we have
	\[
	|S_{a,d,D}(n)| \leq \#\{b \in \Z/2a\Z: b^2 \equiv dD \pmod{4a}\} =: r_{dD}^*(a).
	\]
	An explicit formula for $r_{\Delta}^*(a)$ can be deduced from the proof of Proposition 2 in Chapter 1.2 in \cite{hirzebruchzagier}. If we write $\Delta = \Delta_0 f^2$ with a fundamental discriminant $\Delta_0$ and $f \in \N$, then we have
	\[
	r_{\Delta}^*(a) = \prod_{p \mid a}\alpha_p(a), \quad
	\alpha_p(a) = \begin{dcases}
	p^{\left\lfloor \frac{\nu_p(a)}{2} \right\rfloor}, & \text{if } \nu_p(a) \leq 2\nu_p(f), \\
	p^{\nu_p(f)}\left( \frac{\Delta_0}{p}\right)^{\nu_p(a)-2\nu_p(f)-1}\left[1+\left( \frac{\Delta_0}{p}\right) \right], & \text{if } \nu_p(a) > 2\nu_p(f).
	\end{dcases}
	\]
	For $p\geq 5$ we have $\left|1+\left( \frac{\Delta_0}{p}\right)\right| \leq 2 \leq \sqrt{p}$. Hence we can estimate
	\begin{align*}
	|\alpha_2(a)| &\leq \sqrt{2}\sqrt{2^{\nu_2(n)}}, \\
	|\alpha_3(a)| &\leq \frac{2}{\sqrt{3}}\sqrt{3^{\nu_3(n)}}, \\
	|\alpha_p(a)| &\leq \sqrt{p^{\nu_p(n)}}, \quad \text{if } p \geq 5.
	\end{align*}
	This yields the claimed estimate.
\end{proof}

We remark that we actually have $|S_{a,d,D}(n)| \ll_\varepsilon a^{\varepsilon}$ for every $\varepsilon > 0$. However, we need an explicit constant in our estimate.

\begin{lemma}\label{lemma bessel estimates}
	For $\nu \in\C, x > 0$ and $a \geq 1$ we have
	\[
	I_{\nu}\left(\frac{x}{a}\right) \leq a^{-\nu}I_{\nu}(x).
	\]
	Moreover, for $\nu = \frac{3}{2}$, $x \geq \sqrt{3}\pi$, $a \geq 2$, we have
	\[
	I_{\frac{3}{2}}\left(\frac{x}{a}\right) \leq \frac{1}{4} a^{-\frac{3}{2}}I_{\frac{3}{2}}(x).
	\]
\end{lemma}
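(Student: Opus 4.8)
The plan is to argue directly from the power series
\[
I_\nu(x)=\sum_{m\ge 0}\frac{1}{m!\,\Gamma(m+\nu+1)}\left(\frac{x}{2}\right)^{2m+\nu},
\]
which has strictly positive terms for real $\nu>-1$ and $x>0$ (and $\nu=k-\tfrac12$ is the only case actually needed). For the first inequality I would compare the two sides termwise: since $\left(\tfrac{x}{2a}\right)^{2m+\nu}=a^{-\nu}a^{-2m}\left(\tfrac{x}{2}\right)^{2m+\nu}$ and $a^{-2m}\le 1$ for $a\ge 1$ and $m\ge 0$, every summand of $I_\nu(x/a)$ is at most $a^{-\nu}$ times the corresponding summand of $I_\nu(x)$; summing yields $I_\nu(x/a)\le a^{-\nu}I_\nu(x)$ immediately.

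The second inequality is more subtle, because the naive termwise bound only produces the factor $a^{-3/2}$ and not the extra $\tfrac14$. Indeed, writing $c_m=\tfrac{1}{m!\,\Gamma(m+5/2)}>0$, the $m=0$ term of $a^{3/2}I_{3/2}(x/a)=\sum_{m\ge 0}c_m a^{-2m}\left(\tfrac{x}{2}\right)^{2m+3/2}$ equals the full $m=0$ term $c_0\left(\tfrac{x}{2}\right)^{3/2}$, independently of $a$, whereas $\tfrac14 I_{3/2}(x)$ contains only one quarter of it. Hence the gain of $\tfrac14$ cannot come from the leading term and must be recovered from the tail, which is exactly where the hypothesis $x\ge\sqrt3\,\pi$ enters. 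As a first step I would reduce to $a=2$: each coefficient $a^{-2m}$ is nonincreasing in $a$, so $a^{3/2}I_{3/2}(x/a)$ is a sum of nonincreasing functions of $a$ and is therefore itself nonincreasing, while the right-hand side does not depend on $a$; thus it suffices to treat $a=2$.

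At $a=2$, dividing by $\left(\tfrac{x}{2}\right)^{3/2}$ turns the claim into $\sum_{m\ge 0}c_m\left(4^{-m}-\tfrac14\right)s^{m}\le 0$ with $s=\left(\tfrac{x}{2}\right)^2$. The coefficient $4^{-m}-\tfrac14$ equals $\tfrac34$ for $m=0$, vanishes for $m=1$, and is negative for $m\ge 2$, so the inequality is equivalent to $\tfrac34 c_0\le\sum_{m\ge 2}c_m\left(\tfrac14-4^{-m}\right)s^{m}$. Every term on the right is positive, so I would retain only $m=2$ and $m=3$, insert the explicit ratios $c_2/c_0=\tfrac{2}{35}$ and $c_3/c_0=\tfrac{4}{945}$, and verify that at the threshold $s=\tfrac{3\pi^2}{4}$ (that is, $x=\sqrt3\,\pi$) these two terms already sum to more than $\tfrac34 c_0$; since the right-hand side is increasing in $s$, the bound then holds for all $x\ge\sqrt3\,\pi$. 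The main obstacle is precisely this numerical margin: a single tail term ($m=2$) falls just short of $\tfrac34 c_0$ at $x=\sqrt3\,\pi$, so one is forced to keep two terms, and the constant $\sqrt3\,\pi$ is essentially what makes the two-term estimate close. Equivalently, using the closed form $I_{3/2}(x)=\sqrt{\tfrac{2}{\pi x}}\left(\cosh x-\tfrac{\sinh x}{x}\right)$, the whole statement collapses to the clean inequality $\cosh x-\tfrac{\sinh x}{x}\ge 16\left(\cosh\tfrac{x}{2}-\tfrac{2\sinh(x/2)}{x}\right)$ for $x\ge\sqrt3\,\pi$, which can be checked by the same short tail analysis applied to the power series of $g(x)=\cosh x-\tfrac{\sinh x}{x}$.
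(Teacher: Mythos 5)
Your proof is correct, and it takes a somewhat different route from the paper's. For the first inequality the paper differentiates $a^{\nu}I_{\nu}(x/a)$ in $a$ and uses the recurrence $I_{\nu}'(z)=I_{\nu+1}(z)+\tfrac{\nu}{z}I_{\nu}(z)$ together with positivity of $I_{\nu+1}$ to see that the quotient $I_{\nu}(x/a)/(a^{-\nu}I_{\nu}(x))$ decreases from its value $1$ at $a=1$; your termwise comparison of the power series (using $a^{-2m}\le 1$) reaches the same conclusion more elementarily, and it makes transparent that the statement really concerns real $\nu>-1$ rather than the ``$\nu\in\C$'' of the lemma as stated. For the second inequality both arguments first reduce to $a=2$ by monotonicity in $a$, but then they diverge: the paper invokes the closed form $I_{3/2}(x)=\sqrt{2/(\pi x)}\bigl(\cosh x-\tfrac{\sinh x}{x}\bigr)$, asserts (without detail, ``by elementary calculus'') that $I_{3/2}(x/2)/(2^{-3/2}I_{3/2}(x))$ is decreasing in $x$, and evaluates numerically at $x=\sqrt{3}\pi$ to get $\approx 0.206<\tfrac14$; you instead rewrite the claim as $\sum_{m\ge 0}c_m(4^{-m}-\tfrac14)s^m\le 0$ with $s=(x/2)^2$, observe that every tail term with $m\ge 2$ is increasing in $s$, and check that the $m=2$ and $m=3$ terms alone already dominate $\tfrac34 c_0$ at the threshold $s=\tfrac{3\pi^2}{4}$ (the two contributions are about $0.59\,c_0$ and $0.40\,c_0$, comfortably exceeding $0.75\,c_0$). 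Your version has the advantage of replacing the paper's unproved monotonicity-in-$x$ claim by an explicit finite computation, at the cost of a slightly weaker numerical margin than the paper's $0.206$; both establish the constant $\tfrac14$ for $x\ge\sqrt{3}\pi$, which is all that the proof of Proposition~\ref{proposition sign changes} requires.
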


\begin{proof}
	For fixed $x > 0$ we consider the function
	\[
	\phi(a) := \frac{I_{\nu}\left(\frac{x}{a}\right)}{a^{-\nu}I_{\nu}(x)}.
	\]
	We have $\phi(1) = 1$. We show that $\phi(a)$ it is monotonically decreasing, which will prove the first part of the lemma. Using $I_{\nu}'(z) = I_{\nu+1}(z)+\frac{\nu}{z}I_{\nu}(z)$ (see (9.1.21) in \cite{abramowitz}), we compute
	\begin{align*}
	\frac{d}{da}a^\nu I_{\nu}\left(\frac{x}{a}\right)& = \nu a^{\nu-1}I_{\nu}\left(\frac{x}{a}\right)-a^{\nu-2} x I_{\nu}'\left(\frac{x}{a}\right) \\
	&= \nu a^{\nu-1}I_{\nu}\left(\frac{x}{a}\right)-a^{\nu-2} x \left( I_{\nu+1}\left(\frac{x}{a}\right)+\frac{\nu a}{x}I_{\nu}\left(\frac{x}{a}\right)\right) = -a^{\nu-2}xI_{\nu+1}\left( \frac{x}{a}\right).
	\end{align*}
	Since $a$ and $x$ are positive, and the $I$-Bessel function is positive on positive arguments, the derivative of $\phi(a)$ is negative, which shows that $\phi(a)$ is decreasing.
	
	In order to prove the second part of the lemma, we first note that
	\[
	\frac{I_{\frac{3}{2}}\left(\frac{x}{a}\right)}{a^{-\frac{3}{2}}I_{\frac{3}{2}}(x)} \leq \frac{I_{\frac{3}{2}}\left(\frac{x}{2}\right)}{2^{-\frac{3}{2}}I_{\frac{3}{2}}(x)}
	\]
	for $a \geq 2$ and every $x > 0$, since the quotient on the left-hand side is decreasing as a function of $a$. Note that 
	\[
	I_{\frac{3}{2}}(x) = \frac{2x\cosh(x)-2\sinh(x)}{x\sqrt{2\pi x}}.
	\]	
	Now one can show that $I_{\frac{3}{2}}\left(\frac{x}{2}\right)/(2^{-\frac{3}{2}}I_{\frac{3}{2}}(x))$ is monotonically decreasing for $x > 0$ using elementary calculus. Hence we obtain for $x \geq \sqrt{3}\pi$ that
	\[
	\frac{I_{\frac{3}{2}}\left(\frac{x}{2}\right)}{2^{-\frac{3}{2}}I_{\frac{3}{2}}(x)} \leq \frac{I_{\frac{3}{2}}\left(\frac{\sqrt{3}\pi}{2}\right)}{2^{-\frac{3}{2}}I_{\frac{3}{2}}(\sqrt{3}\pi)} \sim 0.206 < \frac{1}{4},
	\]
	which yields the desired estimate.
\end{proof}

\begin{proof}[Proof of Proposition~\ref{proposition sign changes}]
	By Proposition~\ref{proposition fourier expansion bessel} the sign of $c_{f_{k,d,D}}(n)$ equals $(-1)^k$ times the sign of
	\[
	\sum_{a \geq 1}a^{-\frac{1}{2}}S_{a,d,D}(n)I_{k-\frac{1}{2}}\left(\frac{\pi n \sqrt{|dD|}}{a} \right).
	\]
	We first note that for $a = 1$ the Sali\'e sum equals $S_{1,d,D}(n)=(-1)^{ndD}$. Hence it suffices to show that the remaining series satisfies
	\[
	\left|\sum_{a \geq 2}a^{-\frac{1}{2}}S_{a,d,D}(n)I_{k-\frac{1}{2}}\left(\frac{\pi n \sqrt{|dD|}}{a} \right) \right| < I_{k-\frac{1}{2}}\left(\pi n \sqrt{|dD|} \right).
	\]
	Let us first suppose that $k \geq 3$. We use the estimate
	\[
	|S_{a,d,D}(n)| \leq \frac{2\sqrt{2}}{\sqrt{3}}\sqrt{a}
	\]
	from Lemma~\ref{lemma salie estimate} and the estimate
	\[
	I_{k-\frac{1}{2}}\left(\frac{\pi n \sqrt{|dD|}}{a} \right) \leq a^{\frac{1}{2}-k}I_{k-\frac{1}{2}}\left(\pi n \sqrt{|dD|} \right)
	\]
	from Lemma~\ref{lemma bessel estimates} to compute
	\begin{align*}
	\left|\sum_{a \geq 2}a^{-\frac{1}{2}}S_{a,d,D}(n)I_{k-\frac{1}{2}}\left(\frac{\pi n \sqrt{|dD|}}{a} \right) \right| & \leq \frac{2\sqrt{2}}{\sqrt{3}}\sum_{a\geq 2}a^{-\frac{1}{2}}\cdot a^{\frac{1}{2}}\cdot a^{\frac{1}{2}-k}I_{k-\frac{1}{2}}\left(\pi n \sqrt{|dD|} \right) \\
	&= \frac{2\sqrt{2}}{\sqrt{3}}\left(\zeta\left(k-\frac{1}{2}\right)-1\right)I_{k-\frac{1}{2}}\left(\pi n \sqrt{|dD|} \right) \\
	&\leq \frac{2\sqrt{2}}{\sqrt{3}}\left(\zeta\left(\frac{5}{2}\right)-1\right)I_{k-\frac{1}{2}}\left(\pi n \sqrt{|dD|} \right)\\
	&< 0.6\cdot I_{k-\frac{1}{2}}\left(\pi n \sqrt{|dD|} \right).
	\end{align*}
	For $k = 2$ we note that $\pi n \sqrt{|dD|} \geq \sqrt{3}\pi$. Hence we can use the second estimate
	\[
	I_{\frac{3}{2}}\left(\frac{x}{a}\right) \leq \frac{1}{4} a^{-\frac{3}{2}}I_{\frac{3}{2}}(x)
	\]
	for $x \geq \sqrt{3}\pi$ from Lemma~\ref{lemma bessel estimates} to compute
	\begin{align*}
	\left|\sum_{a \geq 2}a^{-\frac{1}{2}}S_{a,d,D}(n)I_{\frac{3}{2}}\left(\frac{\pi n \sqrt{|dD|}}{a} \right) \right| & \leq \frac{1}{\sqrt{6}}\sum_{a\geq 2}a^{-\frac{1}{2}}\cdot a^{\frac{1}{2}}\cdot a^{-\frac{3}{2}}I_{\frac{3}{2}}\left(\pi n \sqrt{|dD|} \right) \\
	&\leq \frac{1}{\sqrt{6}}\left(\zeta\left(\frac{3}{2}\right)-1\right)I_{\frac{3}{2}}\left(\pi n \sqrt{|dD|} \right)\\
	&< 0.7\cdot I_{\frac{3}{2}}\left(\pi n \sqrt{|dD|} \right).
	\end{align*}
	This finishes the proof.
\end{proof}

\section{The proof of Proposition~\ref{proposition combinatorial interpretation}}
\label{section proof combinatorial interpretation}

Let $N \in \N$. In order to prove Proposition~\ref{proposition combinatorial interpretation} we will write the coefficients of the meromorphic modular forms $f_{k,d,r,D,\rho}$ for $\Gamma_0(N)$ in terms of the coefficients of Maass-Poincar\'e series of half-integral weight, similarly as in Proposition~\ref{proposition fourier expansion half-integral}. However, it is now more convenient to work with vector-valued modular forms for the Weil representation. We briefly describe the setup, following \cite{bruinierhabil}.

We consider the lattice $L = \Z$ together with the quadratic form $Q(x) = Nx^2$. The corresponding bilinear form is given by $(x,y) = 2Nxy$. The dual lattice of $L$, which consists of those $x \in \Q$ satisfying $(x,y) \in \Z$ for all $y \in L$, is given by $L' = \frac{1}{2N}\Z$. Hence we can identify the discriminant group $L'/L$ with $\Z/2N\Z$. We let $\e_r$ with $r \in \Z/2N\Z$ denote the standard basis of the group ring $\C[\Z/2N\Z]$. Let $\Mp_2(\Z)$ be the metaplectic double cover of $\SL_2(\Z)$, which can be realized as the set of all pairs $(M,\phi)$ with $M = \left(\begin{smallmatrix} a & b \\ c & d \end{smallmatrix} \right) \in \SL_2(\Z)$ and $\phi: \H \to \C$ holomorphic with $\phi^2(\tau) = c\tau + d$. The Weil representation $\rho_N$ of $\Mp_2(\Z)$ on $\C[\Z/2N\Z]$ is defined on the generators $T = \left(\left(\begin{smallmatrix}1 & 1  \\ 0 & 1 \end{smallmatrix}\right), 1 \right)$ and $S = \left(\left(\begin{smallmatrix}0 & -1  \\ 1 & 0 \end{smallmatrix}\right), \sqrt{\tau} \right)$ of $\Mp_2(\Z)$ by the formulas
\[
\rho_N(T)\e_r = e(Q(r))\e_r, \qquad \rho_N(S)\e_r = \frac{e(-1/8)}{\sqrt{2N}}\sum_{r'(2N)}e(-(r,r'))\e_{r'}.
\]
The dual Weil representation $\rho_N^*$ is obtained by taking complex conjugates in the above formulas everywhere possible.

Let $\kappa \in \Z$. We say that a function $F: \H \to \C[\Z/2N\Z]$ transforms like a modular form of weight $\frac{1}{2}+\kappa$ for $\rho_N$ if it satisfies
\[
F(M\tau) = \phi(\tau)^{1+2\kappa}\rho_N(M,\phi)F(\tau)
\]
for every $(M,\phi) \in \Mp_2(\Z)$. We call $F$ a harmonic Maass form of weight $\frac{1}{2}+\kappa$ for $\rho_N$ if it transforms like a modular form of weight $\frac{1}{2}+\kappa$, is annihilated by the Laplace operator $\Delta_{\frac{1}{2}+\kappa}$ defined in \eqref{eq laplace}, and has a Fourier expansion of the shape
\[
F(\tau) = \sum_{r (2N)}\sum_{\substack{n \equiv r^2(4N) \\ n\gg -\infty}}c_F^+(n,r)q^{\frac{n}{4N}}\e_r + \sum_{r (2N)}\sum_{\substack{n \equiv r^2(4N) \\ n< 0}}c_F^-(n,r)\Gamma\left(\frac{1}{2}-\kappa,\frac{\pi |n| v}{N}\right)q^{\frac{n}{4N}}\e_r.
\]
If the non-holomorphic part of $F$ vanishes, that is, $c_F^-(n,r) = 0$ for all $n < 0$ and $r \in \Z/2N\Z$, then $F$ is a weakly holomorphic modular form. Harmonic Maass forms for the dual Weil representation $\rho_N^*$ are defined analogously, but one has to replace the condition $n \equiv r^2(4N)$ in the Fourier expansion by $-n \equiv r^2 (4N)$.

For every $r \in \Z/2N\Z$ and $n > 0$ with $(-1)^{\kappa+1} n \equiv r^2 \pmod{4N}$, one can construct a Maass-Poincar\'e series 
\[
P_{\frac{1}{2}+\kappa,n,r}(\tau) = q^{-\frac{n}{4N}}(\e_r+\e_{-r})+O(1)
\]
of weight $\kappa$ for $\rho_N$ (if $\kappa$ is even) or $\rho_N^*$ (if $\kappa$ is odd). If $\kappa \geq 2$ then $P_{\frac{1}{2}+\kappa,n,r}$ is weakly holomorphic. We refer to \cite{bruinierhabil}, Section 1.3, for more details on these Poincar\'e series.

\begin{proposition}\label{proposition fourier expansion half-integral vector-valued}
	For $k \geq 2$, and $n \geq 1$ we have
	\begin{align*}
	&c_{f_{k,d,r,D,\rho}}(n)+c_{f_{k,d,-r,D,\rho}}(n) \\
	 &\quad= 
	\begin{dcases} 
	(-1)^{\left\lfloor\frac{k}{2}\right\rfloor+1} n^{2k-1}\sum_{m|n} \left(\frac{D}{m}\right) m^{-k} c_{P_{\frac{3}{2}-k,|d|,r}}^+\left(\frac{n^{2}|D|}{m^{2}},\frac{n\rho}{m}\right), &  \text{if } (-1)^k d > 0, \\
	(-1)^{\left\lfloor\frac{k}{2}\right\rfloor} \ell^{2k-1}\sum_{m|n} \left(\frac{D}{m}\right) m^{k-1} c_{P_{\frac{1}{2}+k,|d|,r}}\left(\frac{n^{2}|D|}{m^{2}},\frac{n\rho}{m}\right) ,& \text{if } (-1)^k d < 0,
	\end{dcases}
	\end{align*}
	where we write $d = \ell^2 d_0$ with a fundamental discriminant $d_0$ and $\ell \in \N$, and $c_{P_{\frac{3}{2}-k,|d|,r}}^+(n)$ denotes the $n$-th coefficient of the holomorphic part of the harmonic Maass form $P_{\frac{3}{2}-k,|d|,r}$.
\end{proposition}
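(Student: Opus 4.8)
The plan is to reproduce the three-step argument behind Proposition~\ref{proposition fourier expansion half-integral}---the Salié/Bessel expansion of the meromorphic form, the Kloosterman/Bessel expansion of the half-integral weight Poincaré series, and the Kloosterman--Salié identity---in the higher-level, vector-valued setting. Each ingredient acquires a level-$N$ counterpart, and the symmetrized combination $c_{f_{k,d,r,D,\rho}}(n)+c_{f_{k,d,-r,D,\rho}}(n)$ on the left will match the symmetric index $\e_r+\e_{-r}$ built into the Poincaré series on the right.

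First I would establish the analogue of Proposition~\ref{proposition fourier expansion bessel} for $f_{k,d,r,D,\rho}$. A system of representatives for $\mathcal{Q}_{N,dD,r\rho}/\Gamma_\infty$ is given by the forms $[a,b,c]$ with $a>0$, $N\mid a$, and $b\pmod{2a}$ satisfying $b\equiv r\rho\pmod{2N}$ and $b^2\equiv dD\pmod{4a}$, with $c=(b^2-dD)/4a$. Unfolding the sum over $Q$ exactly as in the computation cited from \cite{bengoecheapaper} then expresses $c_{f_{k,d,r,D,\rho}}(n)$ as a weighted series over $a$ of Bessel functions $I_{k-\frac{1}{2}}(\pi n\sqrt{|dD|}/a)$ against the level-$N$ Salié sum
\[
S^N_{a,d,r,D,\rho}(n) := \sum_{\substack{b\,(2a) \\ b\equiv r\rho\,(2N) \\ b^2\equiv dD\,(4a)}} \chi_D\!\left(\left[a,b,\tfrac{b^2-dD}{4a}\right]\right) e\!\left(\frac{nb}{2a}\right),
\]
where, since $b$ runs modulo $2a$ subject to $b\equiv r\rho\pmod{2N}$, only moduli $a$ with $N\mid a$ contribute.

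Next I would record the Fourier expansion of the vector-valued Maass--Poincaré series $P_{\frac{1}{2}+k,|d|,r}$ (respectively $P_{\frac{3}{2}-k,|d|,r}$) in terms of Bessel functions and the Kloosterman sums attached to the Weil representation $\rho_N$, exactly as in \eqref{maasspoincareexpansion}; such expansions are standard and can be found in \cite{bruinierhabil, bringmannono}. The coefficient $c_{P_{\frac{1}{2}+k,|d|,r}}(n^2|D|/m^2,\,n\rho/m)$ thereby becomes a Bessel--Kloosterman series whose $\rho_N$-Kloosterman sums are indexed by the pairs $(|d|,r)$ and $(n^2|D|/m^2,\,n\rho/m)$. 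The decisive step is then the level-$N$ Kloosterman--Salié identity generalizing \eqref{salieidentity}, which should rewrite the symmetrization $S^N_{a,d,r,D,\rho}(n)+S^N_{a,d,-r,D,\rho}(n)$ as a divisor sum over $m\mid(n,a)$ of $\left(\frac{D}{m}\right)$ times a $\rho_N$-Kloosterman sum at modulus $a/m$ with index $(n^2|D|/m^2,\,n\rho/m)$. Summing the two meromorphic forms over $\pm r$ is precisely what symmetrizes the Salié sum so that it matches the $\e_r+\e_{-r}$ structure of the Poincaré series.

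Finally, plugging the Kloosterman/Bessel expansion of the Poincaré series and the level-$N$ Kloosterman--Salié identity into the Salié/Bessel expansion of $f_{k,d,r,D,\rho}+f_{k,d,-r,D,\rho}$, and collecting the normalizing constant $C_{k,d,D}$ together with the Legendre-duplication factors as in Proposition~\ref{proposition fourier expansion half-integral}, produces the two claimed formulas; the first case $(-1)^kd>0$ can alternatively be deduced from the corresponding result in \cite{loebrichschwagenscheidt}. The main obstacle I expect is the level-$N$ Kloosterman--Salié identity itself: one must track the congruence $b\equiv r\rho\pmod{2N}$ and the constraint $N\mid a$ through the Weil representation, verify that the emerging sums are exactly the $\rho_N$-Kloosterman sums with the stated indices, and confirm that it is the symmetrization over $\pm r$ that makes the two sides agree with the correct normalization.
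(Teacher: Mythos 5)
Your proposal follows essentially the same route as the paper's (sketched) proof: a level-$N$ Sali\'e/Bessel expansion of $f_{k,d,r,D,\rho}$ with the extra conditions $N\mid a$ and $b\equiv r\rho\pmod{2N}$, the Kloosterman/Bessel expansion of the vector-valued Poincar\'e series from \cite{bruinierhabil}, and a level-$N$ Kloosterman--Sali\'e identity matching the $\e_r+\e_{-r}$ symmetrization. The one ingredient you flag as the main obstacle is exactly what the paper supplies from the literature: after identifying the $\rho_N$-Kloosterman sums with the Jacobi--Kloosterman sums of \cite{grosskohnenzagier} via Shintani's formula (\cite{bruinierhabil}, Proposition 1.1), the required Sali\'e-to-Kloosterman divisor-sum identity is the lemma on p.~524 of \cite{grosskohnenzagier}.
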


\begin{proof}
	The idea of the proof is similar to the proof of Proposition~\ref{proposition fourier expansion half-integral}, but we need higher-level versions of the results cited there. Putting together the results from the literature is straightforward, but lengthy, so we will only give a sketch. First, the Fourier expansion of $f_{k,d,r,D,\rho}$ can be computed as in \cite{bengoecheapaper}, Proposition 2.2. The resulting expansion looks like the one given in Proposition~\ref{proposition fourier expansion bessel} above, but one has to add the condition $N \mid a$ into the sum over $a \geq 1$, and the condition $b \equiv r\rho \pmod{2N}$ into the Sali\'e sum. The Fourier expansion of the vector-valued Maass-Poincar\'e series $P_{\frac{1}{2}+\kappa,n,r}$ in terms of Kloosterman sums and the $I$-Bessel function is computed in \cite{bruinierhabil}, Proposition 1.10. Using Shintani's formula for the Weil representation given in \cite{bruinierhabil}, Proposition 1.1, it is easy to see that the Kloosterman sums appearing in the Fourier coefficients of the vector-valued Maass-Poincar\'e series agree with the Kloosterman sums appearing in the Fourier coefficients of the Jacobi-Poincar\'e series given in the proposition on p. 519 in \cite{grosskohnenzagier}. Finally, the Sali\'e sum appearing in the Fourier expansion of $f_{k,d,r,D,\rho}$ can be written as a divisor sum involving these Jacobi-Kloosterman sums by the lemma on p. 524 in \cite{grosskohnenzagier}, which then gives the formula above.
\end{proof}

\begin{proof}[Proof of Proposition~\ref{proposition combinatorial interpretation}]
	The generating function of the partition function $p(n)$ is given by the inverse of the Dedekind eta function, that is,
	\[
	\eta^{-1}(\tau) = q^{-\frac{1}{24}}\sum_{n \geq 0}p(n)q^n.
	\]
	The transformation behaviour of $\eta^{-1}$ under $\SL_2(\Z)$ implies that the $\C[\Z/12\Z]$-valued function
	\[
	\sum_{r(12)}\left( \frac{12}{r}\right)\eta^{-1}(\tau)\e_r = (\e_1 - \e_5 - \e_7 +\e_{11})\eta^{-1}(\tau)
	\]
	is a weakly holomorphic modular form of weight $-\frac{1}{2}$ for the dual Weil representation $\rho_6^*$, see \cite{bruinierono}, Section 3.2. Its principal part is given by $(\e_1 - \e_5 - \e_7 + \e_{11})q^{-\frac{1}{24}}$, which implies that it can be written as a linear combination of Maass-Poincar\'e series as
	\[
	\sum_{r(12)}\left( \frac{12}{r}\right)\eta^{-1}(\tau)\e_r = P_{-\frac{1}{2},1,1}(\tau) - P_{-\frac{1}{2},1,5}(\tau).
	\]
	From Proposition~\ref{proposition fourier expansion half-integral vector-valued} we obtain (with $k = 2$, $D = -23, \rho = 1, d = 1$, and $r = 1,5$) that the $n$-th coefficient of 
	\begin{align}\label{eq Fminus23 alternative}
	f_{2,1,1,-23,1}+f_{2,1,11,-23,1}-f_{2,1,5,-23,1}-f_{2,1,7,-23,1}
	\end{align}
	is given by
	\[
	n^{3}\sum_{m|n} \left(\frac{-23}{m}\right)\left( \frac{12}{n/m}\right) m^{-2}p\left(\frac{23n^{2}}{24m^{2}}+\frac{1}{24}\right).
	\]
	Note that we have $f_{k,d,r,D,\rho} = f_{k,D,r,d,\rho}$ if $d$ and $D$ are both fundamental with $d \equiv D \pmod{4N}$, since $\chi_D = \chi_d$ on primitive forms of discriminant $dD$. Hence the linear combination \eqref{eq Fminus23 alternative} equals the function $F_{-23}$ defined in \eqref{eq Fminus23} and we obtain the first formula in Proposition~\ref{proposition combinatorial interpretation}.
	
	The second formula follows in a similar way. This time we consider the function
	\[
	\sum_{r(12)}\left( \frac{12}{r}\right)\eta(\tau)j'(\tau)\e_r,
	\]
	which is a weakly holomorphic modular form of weight $\frac{5}{2}$ for the Weil representation $\rho_6$. It has principal part $-(\e_1 - \e_5 - \e_7 + \e_{11})q^{-\frac{23}{24}}$ and can be written in terms of Maass-Poincar\'e series as
	\[
	\sum_{r(12)}\left( \frac{12}{r}\right)\eta(\tau)j'(\tau)\e_r = -P_{\frac{5}{2},23,1}(\tau)+P_{\frac{5}{2},23,5}(\tau).
	\]
	Here one also has to compare the first few Fourier coefficients since the difference of both sides may be a non-zero holomorphic modular form. Now the Fourier expansion $\eta(\tau) = \sum_{n \geq 1}\left( \frac{12}{n}\right)q^{\frac{n^2}{24}}$ implies 
	\[
	\sum_{r(12)}\left( \frac{12}{r}\right)\eta(\tau)j'(\tau)\e_r = \sum_{r(12)}\left(\frac{12}{r}\right)\sum_{\substack{\ell \geq -23 \\ \ell \equiv 1 (12)}}\sum_{d = 1}^{\infty}\left( \frac{12}{d}\right)c_{j'}\left( \frac{\ell-d^2}{24}\right)q^{\frac{\ell}{24}}\e_r.
	\]
	Finally, applying Proposition~\ref{proposition fourier expansion half-integral vector-valued} (with $k = 2, D = 1, \rho = 1, d = -23$ and $r = 1,5$) yields the second formula in Proposition~\ref{proposition combinatorial interpretation}. 
\end{proof}

\end{document}